\newtheorem{theorem}{Theorem}[section]
\newtheorem{remark}[theorem]{Remark}
\newtheorem{lemma}[theorem]{Lemma}
\newcommand{\norm}[1]{\lVert#1\rVert}
\newcommand{\id}[1]{\operatorname{\mathcal{id}_{\mathnormal{#1}}}}
\DeclareMathAlphabet{\mathpzc}{OT1}{pzc}{m}{it}
\newcommand{\pd}[2]{\frac{\partial#1}{\partial#2}}
\newcommand{\beqans}{\begin{subequations}\begin{eqnarray}}
\newcommand{\eeqans}[1]{\end{eqnarray}\label{#1}\end{subequations}}
\newcommand{\beqan}{\begin{eqnarray}}
\newcommand{\eeqan}{\end{eqnarray}}
\DeclareMathOperator{\sq}{\mathpzc{q}}
\begin{document}

\title{Continuum limit of discrete Sommerfeld problems on square lattice}
\author{Basant Lal Sharma\thanks{Department of Mechanical Engineering, Indian Institute of Technology Kanpur, Kanpur, U. P. 208016, India ({bls@iitk.ac.in}). Published in Sadhana, Volume 42(5), May 2017, Pages 713-728 DOI 10.1007/s12046-017-0636-6}}
\date{}
\maketitle
\begin{abstract}
{A low frequency approximation of the {\em discrete} Sommerfeld diffraction problems, involving the scattering of a time harmonic lattice wave incident
on square lattice {by} a discrete Dirichlet {or} a discrete Neumann half-plane,
is investigated. It is established that the exact solution of the discrete model converges to the solution of the continuum model, i.e. the {\em continuous} Sommerfeld problem, in certain discrete Sobolev space defined by W. Hackbusch. The proof of convergence has been provided for both types of boundary conditions when the imaginary part of incident wavenumber is positive.}
\end{abstract}

\setcounter{section}{-1}
\section{Introduction}

Sommerfeld \citep{Sommerfeld1} provided the solution for a two dimensional Helmholtz equation with boundary condition on a half plane of either Dirichlet (diffraction by `soft surface') or Neumann (diffraction by `hard surface'). Many decades later, J. Schwinger and his co-workers \citep{schwinger1968discontinuities,Levine1,Levine2,Levine,MiltonSchwingerbook} formulated the diffraction problems as integral equations of the Wiener--Hopf type \citep{Wiener} and analyzed using the tools discussed by \cite{Paley}. It was found that a formulation of continuous Sommerfeld problems as integral equations of the Wiener--Hopf type has several advantages \citep{Bouwkamp}; for instance, {see the work of}
\cite{Copson} {who originally} applied the `new' method.
{Later} it was found that the integral equation based approach involved various subtle manipulations and {some of} the associated technical details {remained} to be overcome \citep{Heins2}. To tackle some of these issues, the {Sommerfeld} problems have been, thereafter, studied in a well-posed Sobolev space setting \citep{Meister}. The corresponding operator-theoretical approach to the class of diffraction problems, in the presence of a half-plane screen, has been discussed in several distinguished contributions \citep{Speck, Speckbook, Meister}. 
Undoubtedly, these researches have brought mathematical closure on {the continuous} Sommerfeld problems with either a Dirichlet half-plane or a Neumann half-plane.

In recent works of the author, certain {\em discrete analogues} of the continuous Sommerfeld problems have been formulated on a square lattice {where they have been} analyzed using the discrete Wiener--Hopf method \citep{Silbermann}. For example, the results presented by \cite{Bls0, Bls1} are based on Jones' approach \citep{Jones2} (see also \citep{Noble, Jones1}) and lattice model formulation, along with various definitions and notational devices, presented by \cite{Slepyanbook}. On the other hand, \citep{Bls2} and \citep{Bls3} detail a discrete analogue of the integral equation formulation of the continuous Sommerfeld problems employing the square lattice Green's function \citep{Katsura, Eco}. However, in the papers 
\citep{Bls0}--\citep{Bls3}\footnote{with an exception of \citep{Bls2} (\S4.3) where a brief account of the rigorous continuum limit appears for the Neumann (crack) problem, though without a provision of extensive details in the proofs, but with partial announcements of the results stated in this paper}, mostly heuristic asymptotic approximations, {supported by} graphical illustrations, are provided towards the analysis of low frequency approximation of the discrete model, which coincides with, so called, {\em continuum limit}. To supplement this, the present paper provides a rigorous foundation to the asymptotic results for low frequencies stated by \cite{Bls0}--\cite{Bls3}. 

{Naturally, the problem involves two length scales, the wavelength $\lambda=1/k$ of the incident wave and the lattice parameter $b$. However, the wavelength is not necessarily large in the discrete problem, i.e. the incident wave number $k$ (length of the wave vector) is not close to zero. From the traditional continuum point of view the same can be also considered to be a phenomenological effect associated with a possible `resonance' between the incident wave number and the lattice parameter. The ratio $b/\lambda$ (i.e. $bk$) presents itself as a relevant dimensional parameter, a limiting case of which, when it approaches zero, is called `continuum limit' in this paper (borrowing a standard term in the homogenization of discrete media \cite{Blanc, Braides}). In fact, in the context of the assumed discrete structure, either $k\to0$ (long wave wavelength limit\footnote{Due to a simple lattice structure, there is an absence of optical band, hence the zero frequency limit coincides with the long wavelength limit.}) or $b\to0$ (so called continuum limit in homogenization theory \cite{Blanc, Braides}) represent the same limit. \footnote{The additional issue of $kr\to0$, that arises in the continuum model because of the asymptotic nature of analysis in continuous body with sharp edge shaped defect, does not arise in the discrete model. In fact, the near tip field in the discrete model does not possess the same structure as that for the continuum limit (see \cite{Bls2,Bls3}); indeed for fixed $k$, $r\to0$ is not meaningful in the discrete model. However, the issue of $kr\to\infty$ arises in the discrete as well as the continuum model in an analogous manner though the asymptotics are different due to anisotropy of the discrete model \cite{Bls0,Bls1}.}}
In this paper, the continuum limit of the {\em discrete Sommerfeld problems}, posed as discrete Wiener--Hopf problems using the square lattice Green's function \citep{Martin}, is established using certain discrete Sobolev spaces defined by \cite{Hackbusch}. It is established that the exact solution of the discrete Wiener--Hopf equation governing the discrete Sommerfeld diffraction problem converges to the solution of the equivalent continuous Wiener--Hopf equation, governing the corresponding continuous problems in certain discrete Sobolev space of fractional order \citep{Hackbusch, Stevenson}. Indeed, this analysis of {discrete Sommerfeld problems} is also relevant to the scattering of plane polarized electromagnetic waves by a conducting half plane as a result of $5$-point numerical discretization of the two-dimensional Helmholtz equation. The same discretization can also be used in the acoustical counterpart of the problems, where plane waves are supposed to impinge on a soft or hard semi-infinite screen. As the continuous Sommerfeld problems also appear in elastodynamics, in the form of diffraction of elastic shear wave by either a rigid constraint or a crack \citep{Achenbach, Felsen}, the papers \citep{Bls0}--\citep{Bls3}, as well as this paper, are motivated by a discrete analogue of the elastic model, applications of which have a rich history in mechanics of crystals \citep{disloc_maradudin1, Maradudinbook, Slepyan2, Slepyanbook}. 

\subsection{Outline}
A short description of the continuous Wiener--Hopf formulation is provided in the first section for diffraction of a wave incident on a Sommerfeld half plane with Dirichlet boundary condition and Neumann boundary condition. The discrete Wiener--Hopf equation is stated using the square lattice Green's function with discrete Dirichlet and discrete Neumann boundary conditions. A correspondence between the continuous and discrete Wiener--Hopf equations is provided through a suitable choice of notation and scaling. As the main result, it is shown that the continuous and discrete problems yield solutions which can be brought arbitrarily close to each other in a strict mathematical sense using the definition of discrete Sobolev spaces. Concluding remarks close the discussion of this paper, while some additional derivations and expressions appear in two short appendices.

\subsection{Notation}
Let ${\mathbb{R}}$ denote the set of real numbers, ${\mathbb{C}}$ denote the set of complex numbers, and ${\mathbb{Z}}$ denote the set of integers. Let ${{{\mathbb{Z}}^2}}$ denote ${\mathbb{Z}}\times{\mathbb{Z}}$ and ${\mathbb{R}}^2$ denote ${\mathbb{R}}\times{\mathbb{R}}$. The real part, $\Re {z},$ of a complex number ${z}\in{\mathbb{C}}$ is denoted by ${z}_1\in{\mathbb{R}}$ and its imaginary part, $\Im {z}$, is denoted by ${z}_2\in{\mathbb{R}}$ (so that ${z}={z}_1+i{z}_2$). Let $|{z}|$ denote the modulus and $\arg {z}$ denote the argument (with standard branch cut along negative real axis) for ${z}\in{\mathbb{C}}$. Let ${\mathbb{Z}}^+$ denote the set of all non-negative integers and ${\mathbb{Z}}^-$ denote the set of all negative integers. Similarly, ${\mathbb{R}}^+$ denotes non-negative real numbers while ${\mathbb{R}}^-$ denotes negative real numbers. Let $\ell_2$ denote square summable (complex valued) sequences on ${\mathbb{Z}}^-$. Let ${\mathscr{L}}_2(I)$ denote the square summable (complex valued) functions on $I\subset{\mathbb{R}}$ in the sense of Lebesgue. The notation ${\mathit{u}}^{{t}}({\mathit{x}}, +0)$ implies that $\lim_{{{\mathit{y}}}\to+0} {\mathit{u}}^{{t}}({\mathit{x}}, {{\mathit{y}}}) ={\mathit{u}}^{{t}}({\mathit{x}}, +0)$ and $\pd{u}{{{\mathit{y}}}}({\mathit{x}}, +0)$ implies that $\lim_{{{\mathit{y}}}\to+0} \pd{u}{{{\mathit{y}}}}({\mathit{x}}, {{\mathit{y}}}) =\pd{{\mathit{u}}^{{t}}}{{{\mathit{y}}}}({\mathit{x}}, +0).$ Similar interpretations are available for ${\mathit{u}}^{{t}}({\mathit{x}}, -0),$ etc. The discrete (continuous) Fourier transform of a sequence $\{{{\mathtt{u}}}_m\}_{m\in{\mathbb{Z}}}$ (function $u$) is denoted by ${{\mathtt{u}}}^F$ ($u^F$). The continuous and discrete convolutions are denoted by $\ast$, the nature of which is clear from the context. The letter ${{\mathbb{T}}}$ denotes the unit circle (as a counterclockwise contour) in complex plane ${\mathbb{C}}$. The letter ${{z}}$ or ${\upxi}$ is used as a complex variable for the discrete Fourier transform, whereas ${{\xi}}$ is used as a complex variable for the continuous Fourier transform. The letter ${{\mathit{H}}}$ stands for the Heaviside function: ${{\mathit{H}}}(x)=0, x<0$ and ${{\mathit{H}}}(x)=1, x\ge0$. Latin letters ${\mathit{C}}_1, {\mathit{C}}_2,$ etc, denote constants in expressions, inequalities, etc. The square root function, $\sqrt{\cdot}$, has the usual branch cut in the complex plane running from $-\infty$ to $0$. 

Throughout the paper, ``(D)'' denote ``(Dirichlet)'', while ``(N)'' stand for ``(Neumann)''; further, let these two cases have corresponding association with $\pm$, or $\mp$, in context (upper choice is associated with Dirichlet). The notation for other relevant entities is described in the main text.

\section{Wiener--Hopf formulation of continuous Sommerfeld problems}
Following \cite{Noble} (\S2.4), the formulation of the Sommerfeld half-plane diffraction problem in terms of integral equations is considered. It is assumed that the incident wave ${\mathit{u}}^{{i}}({\mathit{x}}, {{\mathit{y}}})$ is a plane wave with {\em wave number} ${\mathit{k}}$ and the {\em angle of incidence}, ${{\mathbb{T}}heta}$ ($0 < {{\mathbb{T}}heta}<\pi$), which the direction of wave propagation normal makes with respect to the positive ${\mathit{x}}$ axis. Following the tradition in scattering theory, the harmonic time dependence of the form $e^{-i{\omega} t}$ has been ignored, and the incident wave is defined by
\beqans
{\mathit{u}}^{{i}}({\mathit{x}}, {{\mathit{y}}})&=&e^{-i({\mathit{x}}{\mathit{k}}_x +{{\mathit{y}}}{\mathit{k}}_y)},\label{cuinc}\\
\text{\rm where }
{\mathit{k}}_x&=&{\mathit{k}}\cos{{\mathbb{T}}heta}\text{\rm and }{\mathit{k}}_y ={\mathit{k}}\sin{{\mathbb{T}}heta}, {\mathit{k}}^2={\mathit{k}}_x^2+{\mathit{k}}_y^2.\label{ckxky}
\eeqans{cuk}
Recall that (also see, equation 5.1.3, 5.2.1 stated by \cite{Harris2}) for a plane wave incident upon a half-plane ${\mathit{x}} < 0, {\mathit{y}} =0$,
\begin{eqn}
{\mathit{u}}^{{t}}({\mathit{x}}, {\mathit{y}})&={\mathit{u}}^{{i}}({\mathit{x}}, {\mathit{y}})+\int_{-\infty}^0 {{\mathcal{G}}}({\mathit{x}}-t, {\mathit{y}})({\mathit{u}}^{{t}}_{{\mathit{y}}}(t, -0)-{\mathit{u}}^{{t}}_{{\mathit{y}}}(t, +0))dt\\
&-\int_{-\infty}^0 \pd{}{{{\mathit{y}}}}{{\mathcal{G}}}({\mathit{x}}-t, {\mathit{y}})({\mathit{u}}^{{t}}(t, +0)-{\mathit{u}}^{{t}}(t, -0))dt,
\label{heins2.1}
\end{eqn}
is a representation of the solution of the traditional Helmholtz equation 
\begin{eqn}
{\Delta} u+{\mathit{k}}^2 u=0,
\label{heins2.2}
\end{eqn}
with a half plane boundary condition at ${\mathit{y}}=+0, {\mathit{y}}=-0, {\mathit{x}}<0$. In \eqref{heins2.2}, the operator ${\Delta}$ denotes the Laplacian, i.e., $\pd{^2}{{\mathit{x}}^2}+\pd{^2}{{\mathit{y}}^2}$, in two spatial dimensions. 
The integral form \eqref{heins2.1} of a solution of \eqref{heins2.2} uses the {\em free space Green's function} \citep{Noble} for the Helmholtz equation \eqref{heins2.2} with fixed (Dirac-delta distributional) source at $(0, {0});$ in particular, ${{\mathcal{G}}}: {\mathbb{R}}^2\to{\mathbb{C}}$ is given by
\begin{eqn}
{{\mathcal{G}}}({\mathit{x}}, {\mathit{y}})=\frac{i}{4}H_0^{(1)}({\mathit{k}} \sqrt{{\mathit{x}}^2+{\mathit{y}}^2}),
\label{cGreen}
\end{eqn}
where $H_0^{(1)}$ is a Hankel function of the first kind \citep{Abram}. The classical boundary conditions of two kinds which Sommerfeld 
\citep{Sommerfeld1, Sommerfeld2} investigated are 
\beqans
{\mathit{u}}^{{t}}({\mathit{x}}, +0) ={\mathit{u}}^{{t}}({\mathit{x}}, -0) =0, {\mathit{x}}\in{\mathbb{R}}^-&&\quad\quad\quad\text{\rm (D)},\label{dirichlet}\\
{\mathit{u}}^{{t}}_{{\mathit{y}}}({\mathit{x}}, +0) ={\mathit{u}}^{{t}}_{{\mathit{y}}}({\mathit{x}}, -0) =0, {\mathit{x}}\in{\mathbb{R}}^-&&\quad\quad\quad\text{\rm (N).}\label{neumann}
\eeqans{cbc}

\begin{remark}
Note that the continuum dispersion relation 
\begin{eqn}
{\omega}^2={\mathit{k}}^2, 
\label{cdisp}
\end{eqn}
for the traditional wave equation is utilized in the Helmholtz equation \eqref{heins2.2} where ${\omega}$ is the {\em frequency} of the incident wave and ${\mathit{k}}$ is its wave number.
In equation \eqref{heins2.1} the incident wave ${\mathit{u}}^{{i}}$ is already incorporated; also included is the Sommerfeld radiation condition when ${\mathit{k}}_2=0$, though this paper assumes ${\mathit{k}}_2=\Im {\mathit{k}}>0$ which obviates the need of it. Since ${\mathit{k}}_2 > 0$, therefore, ${\omega}$ is considered with a positive imaginary part ${\omega}_2$, i.e.,
\begin{eqn}
{\omega}={\omega}_1+i{\omega}_2, {\omega}_2>0.
\label{complexfreq}
\end{eqn}
\end{remark}

Using \eqref{heins2.1}, naturally, each of the two choices of the boundary conditions, \eqref{dirichlet} and \eqref{neumann}, leads to an integral equation of the Wiener--Hopf type for a single unknown for either the discontinuity of the normal derivative of ${\mathit{u}}^{{t}}$ (in the case of Dirichlet boundary condition \eqref{dirichlet}) or the discontinuity of ${\mathit{u}}^{{t}}$ (in the case of Neumann boundary condition \eqref{neumann}). Indeed, when the respective boundary conditions \eqref{dirichlet} and \eqref{neumann} are incorporated, and the even symmetry is invoked for the former scattering problem and odd symmetry for the latter (about ${\mathit{y}}=0$), it follows from \eqref{heins2.1} that
\beqans
0=&{\frac{1}{2}} e^{-i{\mathit{x}} {\mathit{k}}_x}-\int_{-\infty}^0 {{\mathcal{G}}}(|{\mathit{x}}-t|, 0){\mathit{u}}_{{\mathit{y}}}(t, +0)dt, \quad\quad\quad\quad\quad\quad\quad\quad\quad{\mathit{x}}\in{\mathbb{R}}^-&\quad\quad\text{\rm (D)},\label{cWHD}\\
0=&-{\frac{1}{2}} i {\mathit{k}}_ye^{-i{\mathit{x}} {\mathit{k}}_x}+(\pd{^2}{{\mathit{x}}^2}+{\mathit{k}}^2)\int_{-\infty}^0 {{\mathcal{G}}}(|{\mathit{x}}-t|, 0){\mathit{u}}(t, +0)dt, \quad\quad{\mathit{x}}\in{\mathbb{R}}^-&\quad\quad\text{\rm (N),}\label{cWHN}
\eeqans{cWH}
i.e. (2.49) and (2.56) stated by \cite{Noble}, respectively. For the Dirichlet boundary condition \eqref{dirichlet}, ${\mathit{u}}^{{t}}({\mathit{x}}, 0)$ is not known when ${\mathit{x}} > 0,$ while for the Neumann boundary condition \eqref{neumann}, ${\mathit{u}}_{{\mathit{y}}}^{{t}}({\mathit{x}}, 0)$ when ${\mathit{x}} > 0.$ In view of the fact that ${\mathit{k}}$ has a positive imaginary part, the Fourier integral theorem in the complex domain can be applied to the associated integral in both equations \citep{Noble}. 

Each of the equations in \eqref{cWH} is an integral equation of the Wiener--Hopf type as it involves integration on the line ${{\mathit{y}}} =0$, $-\infty < {\mathit{x}} \le0$ and, in particular, the integral kernel is a {\em convolution kernel} \citep{Gohberg}, which is represented by ${\mathpzc{k}}$. In order to allow an application of relevant operator-theoretic results, the equations \eqref{cWHD} and \eqref{cWHN}
can be expressed as
\begin{eqn}
\mathcal{K}{\mathpzc{x}}={\mathpzc{k}}\ast{\mathpzc{x}}&={\mathpzc{f}}, 
\label{cWH2}
\end{eqn}
where, for all ${\mathit{x}}\in{\mathbb{R}}^-$,
\beqans
(\mathcal{K}{\mathpzc{x}})({\mathit{x}})&=&\begin{dcases}
\int_{-\infty}^0 {{\mathcal{G}}}(|{\mathit{x}}-t|, 0){\mathpzc{x}}(t)dt &\text{\rm (D)},\\
(\pd{^2}{{\mathit{x}}^2}+{\mathit{k}}^2)\int_{-\infty}^0 {{\mathcal{G}}}(|{\mathit{x}}-t|, 0){\mathpzc{x}}(t)dt&\text{\rm (N),}
\end{dcases}\label{cKx}\\
{\mathpzc{x}}({\mathit{x}})&=&\begin{cases}
{\mathit{u}}_{{\mathit{y}}}({\mathit{x}})&\quad\quad\quad\quad\quad\quad\quad\quad\quad\quad\quad\quad\quad\text{\rm (D)},\\
{\mathit{u}}({\mathit{x}})&\quad\quad\quad\quad\quad\quad\quad\quad\quad\quad\quad\quad\quad\text{\rm (N)},
\end{cases}\label{cwx}\\
\text{\rm and }{\mathpzc{f}}({\mathit{x}})&=&\begin{cases}
{\frac{1}{2}} e^{-i{\mathit{x}} {\mathit{k}}_x}&\quad\quad\quad\quad\quad\quad\quad\quad\quad\quad\text{\rm (D)},\\
{\frac{1}{2}} i {\mathit{k}}_ye^{-i{\mathit{x}} {\mathit{k}}_x}&\quad\quad\quad\quad\quad\quad\quad\quad\quad\quad\text{\rm (N).}
\end{cases}
\label{cfx}
\eeqans{cconv}
The well known Fourier transform ${f}^F({\xi})=\int_{-\infty}^{+\infty}f({\mathit{x}})e^{-i {\xi} {\mathit{x}}}d{\mathit{x}}, $ of the convolution kernel ${\mathpzc{k}}$, using the Fourier transform of Green's function ${\mathcal{G}}$  in \eqref{cKx},  is given by
\begin{eqn}
{{\mathpzc{k}}}^F({\xi})=\begin{dcases}
\frac{i}{2}\dfrac{1}{\sqrt{{\mathit{k}}^2-{\xi}^2}}&\quad\quad\quad\quad\quad\quad\quad\text{\rm (D)},\\
\frac{i}{2}\sqrt{{\mathit{k}}^2-{\xi}^2}&\quad\quad\quad\quad\quad\quad\quad\text{\rm (N).}
\end{dcases}
\label{cKF}
\end{eqn}
As a standard result, note (for example, see \citep{Stephan, Stephan1, Holm}) that 
\begin{eqn}
&\mathcal{K}: \mathscr{H}^{\mp{\frac{1}{2}}}({\mathbb{R}}^-)\to\mathscr{H}^{\pm{\frac{1}{2}}}({\mathbb{R}}^-),\text{ defined by }\eqref{cWH2}\text{ and }\eqref{cconv}, \\
&\text{ is bijective and continuous for each of the two continuous Sommerfeld problems.}
\label{cKinv}
\end{eqn}
Also see, for example, the elaborate researches contained in \citep{Meister,Speck, Speckbook} in this context, and \citep{Adams, Lions} for the terminology of Sobolev spaces.
\begin{remark}
Apart from the operator-theoretic fact stated above, there exists a technical subtlety in the form of possible exponential increase of ${\mathpzc{f}}({\mathit{x}})$ as ${\mathit{x}}\to-\infty$, but this is ignored in this paper as the focus is not on this issue (see \citep{Bls2} where a subsection is devoted to this technical aspect for the discrete model). 
\label{remexpon}
\end{remark}
\begin{remark}
It follows from the well--known properties of Bessel and Hankel functions (see, for example, \citep{Greenspan}) that ${{\mathcal{G}}}({\mathit{x}}, 0)=\frac{1}{2\pi}\ln\frac{1}{|{\mathit{x}}|}+O(1)\text{ as }{\mathit{x}}\to0,$ so that the kernel ${\mathpzc{k}}$ is either logarithmic (for Dirichlet) or hypersingular (for Neumann). 
\end{remark}

Above facts concerning the integral formulation of the continuous Sommerfeld problems, expanded and carefully accounted in several distinguished works such as \citep{Speck, Speckbook, Meister}, are sufficient for the purpose of this paper. 

\section{Wiener--Hopf formulation of discrete Sommerfeld problems on square lattice}
Consider a $5$-point discretization \citep{Collatz} of the Helmholtz equation \eqref{heins2.2}. Let the resulting two dimensional square grid in ${\mathbb{R}}^2$ be denoted by ${{\mathfrak{S}}}$. Following a treatment of the discretized model as a mechanical model \citep{disloc_maradudin1, Slepyanbook, Bls0, Bls1}, let the displacement of a particle in ${{\mathfrak{S}}}$, indexed by its lattice coordinates $({{\mathtt{x}}}, {{\mathtt{y}}})\in{{\mathfrak{S}}},$ be denoted by ${{\mathtt{u}}}_{{{\mathtt{x}}}, {{\mathtt{y}}}}\in{\mathbb{C}}$. Each `particle' in the lattice ${{\mathfrak{S}}}$ is assumed to interact with its four nearest neighbors in ${{\mathfrak{S}}}$ by linearly elastic identical (massless) bonds with a shear spring constant $1/{{\upepsilon}}^2$. Corresponding to the `discrete' Dirichlet boundary condition, which can be associated with a rigid constraint in a natural manner \citep{Bls1}, the total displacement of each particle located at $({\mathtt{x}}, 0)\in{{{\mathbb{Z}}^2}}$ for all negative integers ${\mathtt{x}}$ is constrained to be zero (see left Fig. \ref{squarelattice_diffraction}). On the other hand, the `discrete' Neumann boundary condition, also naturally associated with the existence of a crack free from external surface forces, is modeled by assuming broken bonds between ${{\mathtt{y}}}=0$ and ${{\mathtt{y}}}=-1$ for all negative integers ${\mathtt{x}}$ in square lattice ${{\mathfrak{S}}}$, as shown in the right part of Fig. \ref{squarelattice_diffraction}. Let 
\begin{eqn}{{\mathit{u}}}^{{\upepsilon}}({\mathit{x}}, {\mathit{y}})={{\mathtt{u}}}_{{\mathtt{x}}, {\mathtt{y}}}\text{\rm with }{\mathit{x}}={\mathtt{x}}{\upepsilon}, {\mathit{y}}=\begin{dcases}
{\mathtt{y}}{\upepsilon}&\quad\quad\quad\text{\rm (D)},\\
({\mathtt{y}}+{\frac{1}{2}}){\upepsilon}&\quad\quad\quad\text{\rm (N),}
\end{dcases} 
\label{mnxy}
\end{eqn}
where $({\mathit{x}}, {\mathit{y}})\in{\mathbb{R}}^2$ are the {\em macroscopic coordinates} corresponding to the lattice coordinates $({\mathtt{x}}, {\mathtt{y}})\in{\mathbb{Z}}^2$ of a particle in ${{\mathfrak{S}}}$ \citep{Bls0, Bls1}. ${{\mathit{u}}}^{{\upepsilon}}$ can be considered as a macroscopic (`continuous') counterpart of the discrete field ${{\mathtt{u}}}^{{\upepsilon}}$ (${\upepsilon}$ denotes the grid spacing, see Fig. \ref{squarelattice_diffraction}).

\begin{figure}[htb!]
\centering
{\includegraphics[width=.4\textwidth]{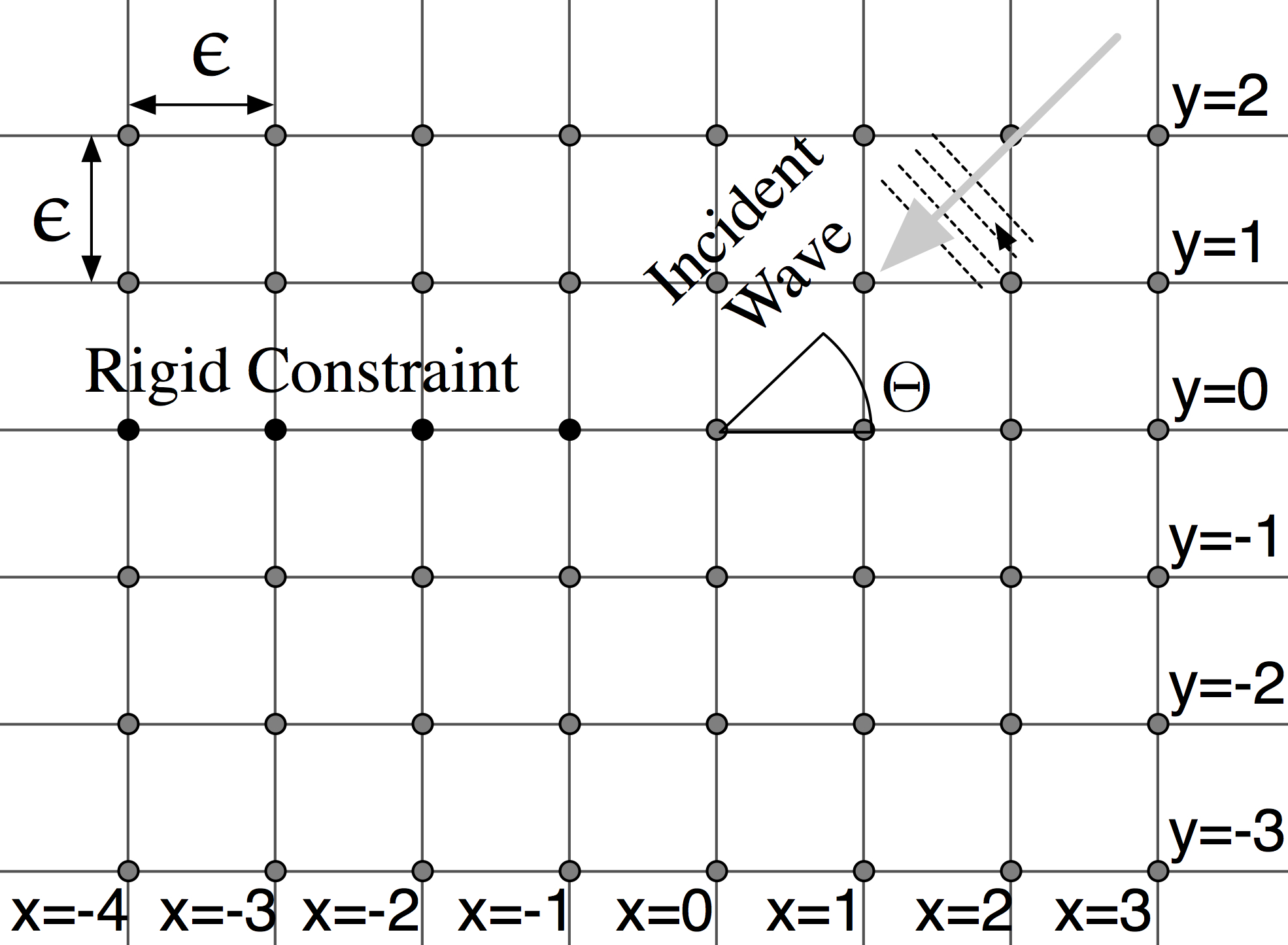}\hspace{.4in}}
{\includegraphics[width=.4\textwidth]{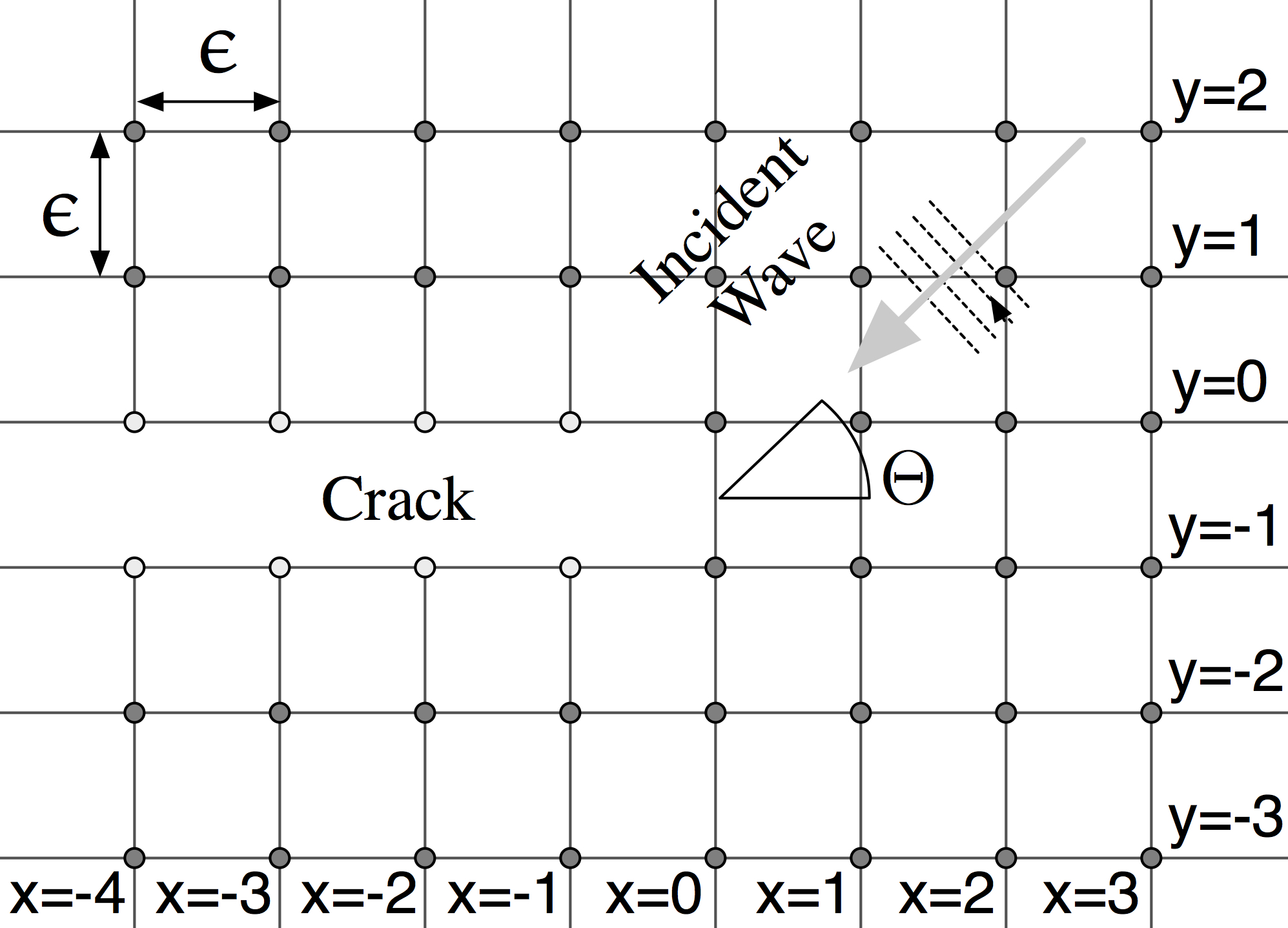}}
\caption{\footnotesize Square lattice ${{\mathfrak{S}}}$ with a semi-infinite rigid constraint (`discrete' Dirichlet boundary condition) in the left figure and with a semi-infinite crack (`discrete' Neumann boundary condition) between ${{\mathtt{y}}}=0$ and ${{\mathtt{y}}}=-1$ in the right figure. An incident lattice wave is also shown, schematically. The intact lattice is shown as solid gray dots. The particles located at the rigid constraint (left) are shown as solid black dots and the particles located at the crack face (right) are shown as solid white dots.}
\label{squarelattice_diffraction}
\end{figure}

On the square lattice model, described thus far, a time harmonic lattice wave is considered incident and the associated diffraction problems, due to the tip of rigid constraint \citep{Bls1, Bls3} and crack \citep{Bls0, Bls2}, are interpreted as {discrete Sommerfeld problems}. The total displacement ${{\mathtt{u}}}^{{t}}={{\mathtt{u}}}^{{i}}+{{\mathtt{u}}}$ of an arbitrary particle in the lattice is a sum of the incident wave displacement ${{\mathtt{u}}}^{{i}}$ and the scattered wave displacement ${{\mathtt{u}}}$ (which includes the reflected wave). On the intact part of the square lattice ${{\mathfrak{S}}}$, ${{\mathtt{u}}}^{{t}}$ satisfies the discrete Helmholtz equation
\beqans
{\triangle}{{\mathtt{u}}}^{{t}}_{{\mathtt{x}}, {\mathtt{y}}}+{\upepsilon}^2{\omega}^2{{\mathtt{u}}}^{{t}}_{{\mathtt{x}}, {\mathtt{y}}}&=&0, \label{dHelmholtz}\\
\text{\rm where }{\triangle}{{\mathtt{u}}}_{{\mathtt{x}}, {\mathtt{y}}}&{:=}&{{\mathtt{u}}}_{{\mathtt{x}}+1, {\mathtt{y}}}+{{\mathtt{u}}}_{{\mathtt{x}}-1, {\mathtt{y}}}+{{\mathtt{u}}}_{{\mathtt{x}}, {\mathtt{y}}+1}+{{\mathtt{u}}}_{{\mathtt{x}}, {\mathtt{y}}-1}-4{{\mathtt{u}}}_{{\mathtt{x}}, {\mathtt{y}}}.\label{dLap}
\label{discretelaplacian}
\eeqans{dHelmholtzfull}
As a discrete counterpart of \eqref{cuinc}, suppose ${{\mathtt{u}}}^{{i}}$ describes an {\em incident lattice wave} with the same frequency ${\omega}$ as in continuous case (also ignoring the factor $e^{-i{\omega} t}$), and a {\em lattice wave vector} $({\upkappa}_x, {\upkappa}_y)\in[-\pi, \pi]^2$, i.e.,
\begin{eqn}
{{\mathtt{u}}}_{{\mathtt{x}}, {\mathtt{y}}}^{{i}}{:=} e^{-i{\upkappa}_x {\mathtt{x}}-i{\upkappa}_y {\mathtt{y}}}, \quad\quad({\mathtt{x}}, {\mathtt{y}})\in{{{\mathbb{Z}}^2}}.
\label{uinc}
\end{eqn}
In terms of the macroscopic coordinates \eqref{mnxy}, the incident lattice wave \eqref{uinc} is expressed as 
\beqans
{{\mathit{u}}}^{{i}}({\mathit{x}}, {\mathit{y}})=\begin{dcases}
e^{-i({\mathit{k}}_x {\mathit{x}}+{\mathit{k}}_y {\mathit{y}})}&\quad\quad\quad\text{\rm (D)},\\
{e^{i{\frac{1}{2}}{\upepsilon}{\mathit{k}}_y}}e^{-i({\mathit{k}}_x {\mathit{x}}+{\mathit{k}}_y {\mathit{y}})}&\quad\quad\quad\text{\rm (N),}
\end{dcases}
\label{dcuinc}\\
\text{\rm with }{\mathit{k}}_x={\upkappa}_x/{\upepsilon}, {\mathit{k}}_y={\upkappa}_y/{\upepsilon}.\label{kxy}
\eeqans{dcuk}
\begin{remark}
It is natural to call $({\mathit{k}}_x, {\mathit{k}}_y)$ as the {\em macroscopic wave vector} of ${{\mathtt{u}}}^{{i}}$ which can be directly identified with \eqref{ckxky}. Thus, in the continuum limit, the incident waves, ${{\mathtt{u}}}^{{i}}$ and ${{\mathit{u}}}^{{i}}$, in discrete and continuous models, respectively, have an immediate correspondence with each other as ${\upepsilon}\to0$. 
\end{remark}

Since the incident lattice wave is a solution of the equation for the intact lattice, the triplet ${\omega}, {\upkappa}_x,$ and ${\upkappa}_y$ satisfy the square lattice dispersion relation, 
\beqans
{\sigma}_{S}({\upkappa}_x, {\upkappa}_y, {\upepsilon}^2{\omega}^2)&=&0,\label{Sdispersion}\\
\text{\rm where }
{\sigma}_{S}({\upkappa}_x, {\upkappa}_y, {\upomega}^2)&{:=}&{\upomega}^2 -4+2\cos{{\upkappa}_x}+2\cos{{\upkappa}_y}, \quad\quad({\upkappa}_x, {\upkappa}_y)\in [-\pi, \pi]^2.
\label{Sdispersionsym}
\eeqans{Sdispersionfull}
Note that, as ${\upepsilon}\to0$, using \eqref{dcuk} the dispersion relation \eqref{Sdispersion} reduces to 
${\omega}^2\simeq {\mathit{k}}_x^2+{\mathit{k}}_y^2=\frac{1}{{\upepsilon}^2}({\upkappa}_x^2+{\upkappa}_y^2),$ which can be interpreted as {macroscopic dispersion relation} and identified with \eqref{cdisp}. 

Recall that the frequency ${\omega}$ of incident lattice wave ${{\mathtt{u}}}^{{i}}$ \eqref{uinc} (same as that of incident continuous wave ${{\mathit{u}}}^{{i}}$ \eqref{cuinc}), is considered as a complex number \eqref{complexfreq}. In the context of this paper, it is assumed that ${\upepsilon}{\omega}_1$ lies in $(0, 2\sqrt{2}),$ and, moreover, since low frequency limit is studied in this paper, it is assumed \cite{Bls0, Bls1} that $0<{\upepsilon}\ll1$ so that it is not close to any non-zero frequency in the exceptional set $\{0, 2, 2\sqrt{2}\}$ \citep{Shaban}. Let, ${\upkappa}$, the {\em lattice wave number} of incident lattice wave ${{\mathtt{u}}}^{{i}}$, be defined by the relation (compare with \eqref{ckxky})
\begin{eqn}
{\upkappa}_x={\upkappa}\cos{{\mathbb{T}}heta}, {\upkappa}_y={\upkappa}\sin{{\mathbb{T}}heta}, \\
{\upkappa}={\upkappa}_1+i{\upkappa}_2, {\upkappa}_1\ge0,
\label{complexk}
\end{eqn}
where ${{\mathbb{T}}heta}\in(-\pi, \pi]$ is the angle of incidence of ${{\mathtt{u}}}^{{i}}$ \eqref{uinc} (same as that of ${\mathit{u}}^{{i}}$). In the same way as ${\upkappa}$ is determined by \eqref{complexk}, its continuous analogue, ${\mathit{k}}$, is defined by ${\mathit{k}}{:=}{\upepsilon}^{-1}{\upkappa},$ so that it  is interpreted as the {\em macroscopic wave number} of incident wave and identified with that defined in \eqref{cuk}. 

In \citep{Bls2, Bls3}, it is shown that the crack and rigid constraint diffraction problems are equivalent to that of inverting a Toeplitz operator using the square lattice Green's function \citep{Katsura, Eco, Martin, Zemla} ${{\mathcal{G}}}^{{\upepsilon}}: {{\mathbb{Z}}^2}\to{\mathbb{C}}$ 
\begin{eqn}
{{\mathcal{G}}}^{{\upepsilon}}_{{{\mathtt{x}}}, {{\mathtt{y}}}}=\frac{1}{4\pi^2} \int_{-\pi}^{\pi} \int_{-\pi}^{\pi} \frac{\cos {{\mathtt{x}}}{{\upxi}}\cos {{\mathtt{y}}}{{\upeta}}}{{\sigma}_{S}({\upxi}, {\upeta}, {\upepsilon}^2{\omega}^2)} d{{\upxi}} d{{\upeta}}, \quad\quad({{\mathtt{x}}}, {{\mathtt{y}}})\in{{{\mathbb{Z}}^2}},
\label{latticegreen3}
\end{eqn}
where the definition of ${\sigma}_{S}$ in \eqref{Sdispersionsym} is used. For the discrete Dirichlet problem, a unique solution can be found \citep{Bls1, Bls3} in terms of the displacement $\{{{\mathtt{u}}}_{l, 1}\}_{l\in{\mathbb{Z}}^-}$ and ${{\mathtt{u}}}_{0, 0}$, while for the discrete Neumann problem, in terms of the displacement $\{{{\mathtt{u}}}_{l, 0}\}_{l\in{\mathbb{Z}}^-}$ \citep{Bls0, Bls2}. For the discrete Dirichlet problem, using the discrete Helmholtz equation for ${{\mathtt{y}}}=1, {\mathtt{x}}\in{\mathbb{Z}}^-$, while for the discrete Neumann problem, using an analogous equation for ${{\mathtt{y}}}=0, {\mathtt{x}}\in{\mathbb{Z}}^-$, a system of equations in the form of a discrete convolution is found \citep{Krein, Gohberg}. Tuning these discrete Wiener--Hopf equations, stated and derived by \cite{Bls2, Bls3} for both discrete Sommerfeld problems, to suit the issue attended in this paper (compare \eqref{cWH2}), they are rewritten as convolution equations of the form
\begin{eqn}
\mathcal{K}^{{\upepsilon}}{\mathpzc{x}}^{{\upepsilon}}={\mathpzc{k}}^{{\upepsilon}}\ast{\mathpzc{x}}^{{\upepsilon}}&={\mathpzc{f}}^{{\upepsilon}}, 
\label{dWH}
\end{eqn}
(with $(\mathcal{K}^{{\upepsilon}}{\mathpzc{x}}^{{\upepsilon}})_{{\mathtt{x}}}
=\sum_{j=-\infty}^{-1}{\mathtt{k}}^{{\upepsilon}}_{{\mathtt{x}}-j}{\mathpzc{x}}^{{\upepsilon}}_j, 
{{\mathtt{x}}}\in{\mathbb{Z}}^-$) where
\beqans
{\mathtt{k}}^{{\upepsilon}}_{j}&=&\begin{dcases}
{\frac{1}{2}}{\upepsilon}{\mathtt{c}}^{{\upepsilon}}_{j} &\quad\quad\quad\quad\quad\quad\quad\quad\text{\rm (D)},\\
-{\upepsilon}^{-1}{\mathtt{c}}^{{\upepsilon}}_{j} &\quad\quad\quad\quad\quad\quad\quad\quad\text{\rm (N),}
\end{dcases} j\in{\mathbb{Z}},\label{dWHk}\\
{\mathtt{c}}^{{\upepsilon}}_j&=&\begin{dcases}
\delta_{j, 0}-2{{\mathcal{G}}}^{{\upepsilon}}_{j, 1} &\quad\quad\quad\text{\rm (D)},\\
\delta_{j, 0}-2({{\mathcal{G}}}^{{\upepsilon}}_{j, 1}-{{\mathcal{G}}}^{{\upepsilon}}_{j, 0})&\quad\quad\quad\text{\rm (N),}
\end{dcases}
\text{, moreover, }{\mathtt{c}}^{{\upepsilon}}_j={\mathtt{c}}^{{\upepsilon}}_{-j}, j\in{\mathbb{Z}}^-{\mathit{u}}p\{0\},\label{dWHc}\\
{\mathpzc{x}}^{{\upepsilon}}_j&=&\begin{dcases}
{\upepsilon}^{-1}{{\mathtt{v}}}_{j, 1}, {{\mathtt{v}}}_{j, 1}{:=}{{\mathtt{u}}}_{j, 1}-{{\mathtt{u}}}_{j, 0}&\quad\quad\text{\rm (D)},\\
{{\mathtt{u}}}_{j, 0}&\quad\quad\text{\rm (N),}
\end{dcases}j\in{\mathbb{Z}}^-,\label{dWHw}\\
{\mathpzc{f}}^{{\upepsilon}}_{{\mathtt{x}}}&=&\begin{dcases}
\begin{rcases}
{\frac{1}{2}} {{\mathcal{G}}}^{{\upepsilon}}_{{\mathtt{x}}+1, 1}{{\mathtt{u}}}^{{t}}_{0, 0}+{\frac{1}{2}}{{\mathtt{u}}}^{{i}}_{{\mathtt{x}}, 0}&\\
+{\frac{1}{2}}\sum_{j=-\infty}^{-1}{\frac{1}{2}} (\delta_{{\mathtt{x}}, j}-2{\upepsilon}^{-1}{\mathtt{k}}^{{\upepsilon}}_{{{\mathtt{x}}}-j})({{\mathtt{u}}}^{{i}}_{j, 1}+{{\mathtt{u}}}^{{i}}_{j, -1}-2{{\mathtt{u}}}^{{i}}_{j, 0})&
\end{rcases}
&\text{\rm (D)},\\
-{\upepsilon}^{-1}\sum_{j=-\infty}^{-1}{\frac{1}{2}} (\delta_{{\mathtt{x}}, j}-(-{\upepsilon}){\mathtt{k}}^{{\upepsilon}}_{{{\mathtt{x}}}-j})({{\mathtt{u}}}^{{i}}_{j, 0}-{{\mathtt{u}}}^{{i}}_{j, -1})&\text{\rm (N),}
\end{dcases} {{\mathtt{x}}}\in{\mathbb{Z}}^-\label{dWHf}
\eeqans{dWHfull}
The decoration ${\upepsilon}$ connotes the length scale dependence of the corresponding entity; in order to avoid notational difficulties, the dependence has been suppressed for a few symbols, whose nature is clear from the context.

The existence and uniqueness of the exact solution, when ${\omega}_2>0$, for the discrete Sommerfeld problems \eqref{dWH} is established by \cite{Bls2, Bls3}, using the fundamental results of \cite{Krein, Widom1}, 
where it is shown that the 
relevant Toeplitz operator is invertible on $\ell_2$. The latter holds on a possibly weighted space, depending on the incidence angle, but that aspect is not touched 
in this paper as stated in {the Remark \ref{remexpon}.}
\begin{remark}
The continuum limit of the discrete Helmholtz equation can be regarded as the classical two dimensional Helmholtz equation \citep{Hackbusch}, and, not surprisingly, it is found by \cite{Bls0, Bls1} that the integral form of the discrete solution (in both discrete Sommerfeld problems) asymptotically approaches the continuum solution as ${\upepsilon}\to0$. Partial announcement of the same for the Neumann (crack) problem has appeared in \citep{Bls2}. 
In the remaining paper, 
these asymptotic results are provided a detailed rigorous foundation based on the discrete and continuous Sobolev spaces of fractional order. 
The particular choice of placement of ${\upepsilon}$ and its exponents in \eqref{dWHfull} shall become justified in the next section. 
\end{remark}

For ${\omega}_2>0$ and for all ${z}\in{\mathbb{T}}$, by an application of \eqref{latticegreen3} and simplification of the relevant contour integral on ${{\mathbb{T}}}$, it has been found by \cite{Bls2, Bls3} that
\begin{eqn}
\sum_{-\infty}^{+\infty}{\mathtt{c}}^{{\upepsilon}}_{{\mathtt{x}}}{z}^{-{{\mathtt{x}}}}=\begin{dcases}\frac{1}{{{\mathpzc{L}}}_{{c}}({z})}&\text{\rm (D)},\\{{\mathpzc{L}}}_{{k}}({z})&\text{\rm \rm (N),}\end{dcases}
\label{FCL}
\end{eqn}
where
\beqan
{{\mathpzc{L}}}_{{k}}({{z}})&{:=}&\frac{{{\mathpzc{h}}}({{z}})}{{{\mathpzc{r}}}({{z}})}, {{\mathpzc{h}}}({{z}}){:=}\sqrt{{{\mathpzc{H}}}({{z}})}, {{\mathpzc{r}}}({{z}}){:=}\sqrt{{{\mathpzc{R}}}({{z}})}, \quad\quad{{z}}\in{\mathbb{C}}\setminus{\mathscr{B}},\label{Lk}\\
{{\mathpzc{L}}_{{c}}}&{:=}&\frac{{{\mathpzc{r}}}{{\mathpzc{h}}}}{{{\mathpzc{Q}}}}\text{\rm on }{\mathbb{C}}\setminus{\mathscr{B}},\label{Lc}\\
{{\mathpzc{H}}}({{z}})&{:=}&{{\mathpzc{Q}}}({{z}})-2, {{\mathpzc{R}}}({{z}}){:=}{{\mathpzc{Q}}}({{z}})+2, {{\mathpzc{Q}}}({{z}}){:=}4-{{z}}-{{z}}^{-1}-{\upepsilon}^2{\omega}^2, {{z}}\in{\mathbb{C}},
\label{HR}
\eeqan
and ${\mathscr{B}}$ denotes the union of branch cuts for ${\mathpzc{L}}_{{k}}$, borne out of the chosen branch \eqref{branch} for ${{\mathpzc{h}}}$ and ${{\mathpzc{r}}}$ such that for all ${{z}}\in{\mathbb{C}}\setminus{\mathscr{B}}$, 
\begin{eqn}
-\pi<\arg {{\mathpzc{H}}}({{z}}) <\pi, \Re {{\mathpzc{h}}}({{z}})>0, \Re {{\mathpzc{r}}}({{z}})>0, {\text{\rm sgn}} \Im {{\mathpzc{h}}}({{z}})={\text{\rm sgn}} \Im {{\mathpzc{r}}}({{z}}).
\label{branch}
\end{eqn}
Some related details are also briefly recapitulated in Appendix \ref{appL} for reader's convenience.

\begin{figure}[t]\centering
{\includegraphics[width=.9\textwidth]{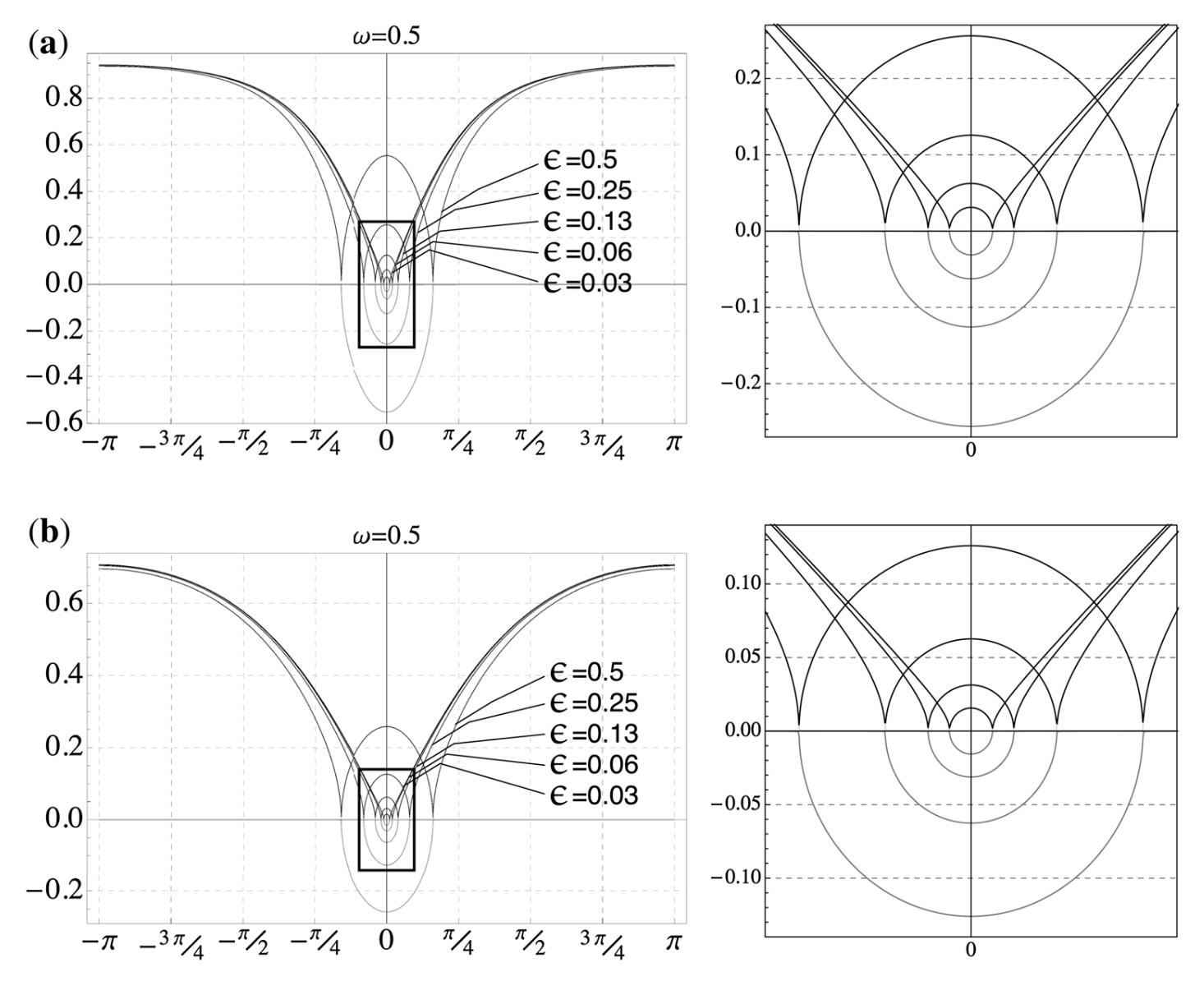}}
\caption{\footnotesize {The kernel ${{\mathpzc{L}}}_{{c}}({z})$ and ${{\mathpzc{L}}}_{{k}}({z})$, respectively, for the (a) rigid constraint and (b) crack in square lattice. Note that ${\omega}=0.5$ (constant for all plots). Light gray denotes real part, gray denotes imaginary part, and black denotes the modulus (on the vertical axis). The horizontal axis corresponds to ${\upxi}$ (with ${z}=e^{-i{\upxi}}$). The right plots present a zoomed-out part of the plot on the left. }}
\label{squareslitcrack_kernelcontlim}\end{figure}

\begin{remark}
By the Fourier series expansion \eqref{FCL} of $1/{{\mathpzc{L}}}_{{c}}$ in the discrete Dirichlet boundary condition \eqref{dWHk}, and the case of Dirichlet condition $(D)$ in \eqref{cKF}, it follows that
\begin{eqn}
\sum_{-\infty}^{+\infty}{\mathtt{k}}^{{\upepsilon}}_{{\mathtt{x}}}{z}^{-{{\mathtt{x}}}}={\frac{1}{2}}{\upepsilon}\frac{1}{{{\mathpzc{L}}}_{{c}}({z})}&={\frac{1}{2}}{\upepsilon}\frac{{\mathpzc{Q}}({z})}{{\mathpzc{h}}({z}){\mathpzc{r}}({z})}\sim{\frac{1}{2}}{\upepsilon}\frac{1}{\sqrt{{\upxi}^2-{\upepsilon}^2{\omega}^2}}={{\mathpzc{k}}}^F({\xi}).
\label{kernellimslit}
\end{eqn}
Correspondingly, for the Neumann boundary condition \eqref{dWHk}, using ${{\mathpzc{L}}}_{{k}}$ in \eqref{FCL} and the corresponding case of Neumann $(N)$ in \eqref{cKF},
\begin{eqn}
\sum_{-\infty}^{+\infty}{\mathtt{k}}^{{\upepsilon}}_{{\mathtt{x}}}{z}^{-{{\mathtt{x}}}}=-{\upepsilon}^{-1}{{\mathpzc{L}}_{{k}}}({z})=-{\upepsilon}^{-1}\frac{{\mathpzc{h}}({z})}{{\mathpzc{r}}({z})}\sim-{\upepsilon}^{-1}{\frac{1}{2}}\sqrt{{\upxi}^2-{\upepsilon}^2{\omega}^2}={{\mathpzc{k}}}^F({\xi}),
\label{kernellimcrack}
\end{eqn}
as ${\upepsilon}\to0,$ with ${z}=e^{-i{\upxi}}, {\upxi}={\upepsilon}{\xi}$. Note that, in the last term for each case of boundary condition in \eqref{dWHf}, ${{\mathtt{u}}}^{{i}}_{j, 1}+{{\mathtt{u}}}^{{i}}_{j, -1}-2{{\mathtt{u}}}^{{i}}_{j, 0}\sim-{\upepsilon}^2{\mathit{k}}_y^2{{\mathtt{u}}}^{{i}}_{j, 0}\to0$ and $-{\frac{1}{2}}{\upepsilon}^{-1}({{\mathtt{u}}}^{{i}}_{j, 0}-{{\mathtt{u}}}^{{i}}_{j, -1})\sim 
i{\frac{1}{2}}{\mathit{k}}_y{{\mathtt{u}}}^{{i}}_{j, 0}$ as ${\upepsilon}\to0$ (corresponding rigorous statements also appear below). {Fig. \ref{squareslitcrack_kernelcontlim} demonstrates an instance of the results stated by \eqref{kernellimslit} and \eqref{kernellimcrack}.}
\end{remark}

\begin{figure}[t]\centering
{\includegraphics[width=\textwidth]{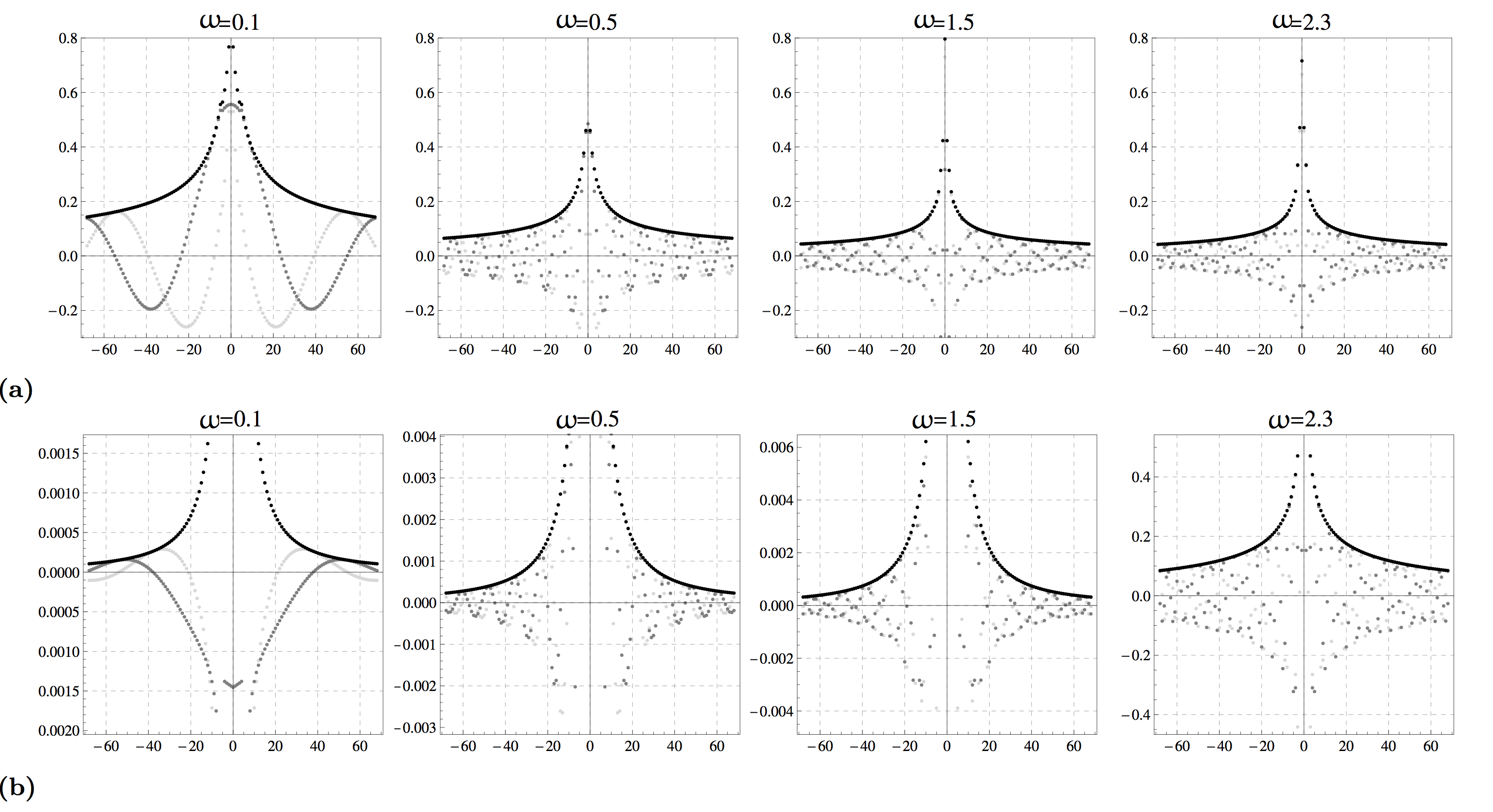}}
\caption{\footnotesize {Fourier coefficients $\{{\mathtt{c}}_j\}_{j\in{\mathbb{Z}}}$ of the kernel ${{\mathpzc{L}}}^{-1}_{{c}}({z})$ and ${{\mathpzc{L}}}_{{k}}({z})$, respectively, as stated by \eqref{FCL} for the (a) Dirichlet condition (rigid constraint) and (b) Neumann condition (crack) in square lattice. Note that ${\upomega}={\upepsilon}{\omega}$ so that ${\upepsilon}$ can be calculated for a given ${\omega}$ (by treating it as a constant for all plots). Black dots represent the modulus and gray dots represent the imaginary part while dotted black curves represent the real part (on the vertical axis). The horizontal axis corresponds to $j$.}}
\label{squareslitcrack_kernelFouriercoefficients}\end{figure}

\begin{figure}[t]\centering
{\includegraphics[width=\textwidth]{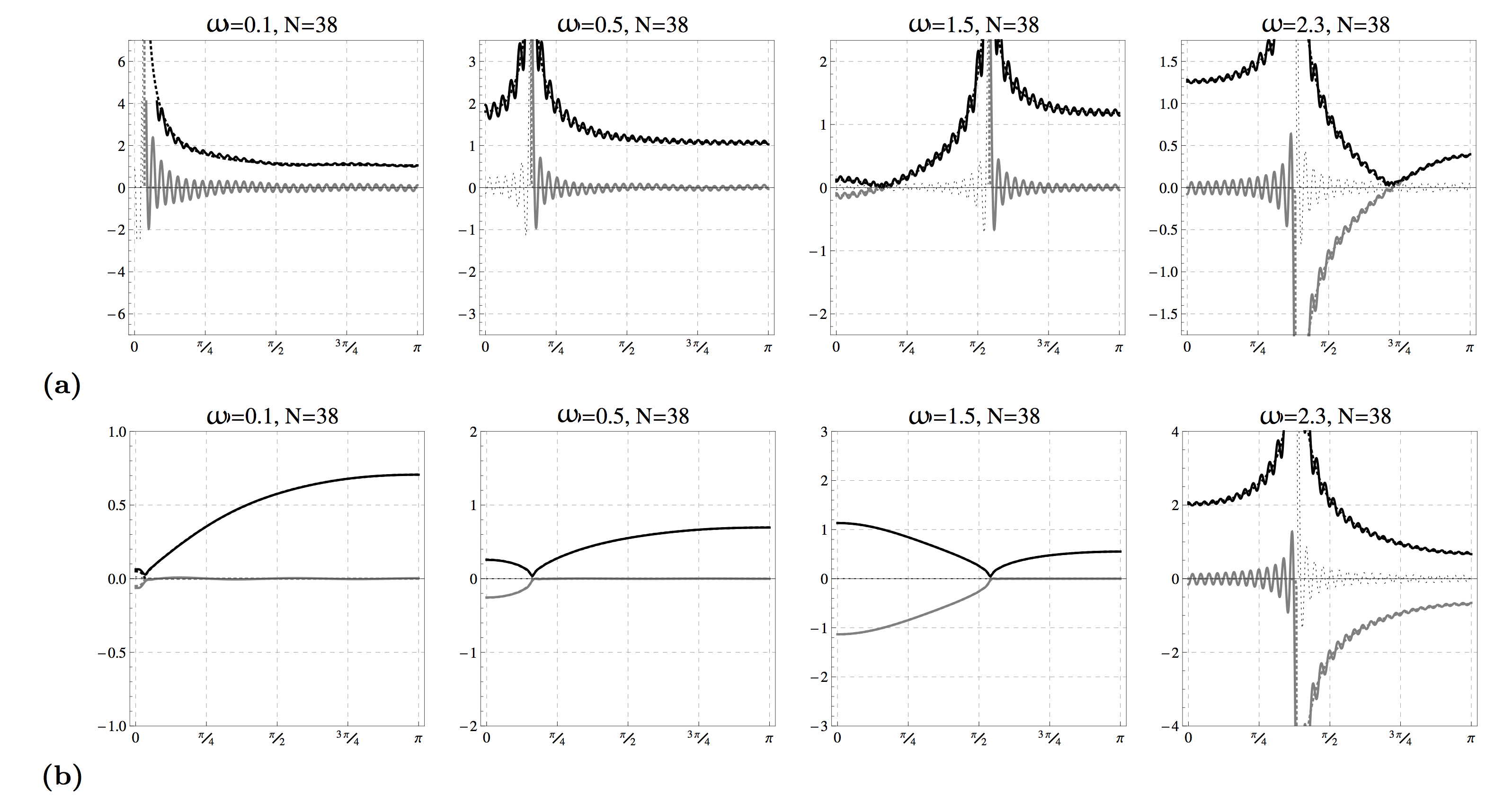}}
\caption{\footnotesize {Fourier partial sum $\sum\nolimits_{-N}^{+N}{\mathtt{c}}_j{z}^{-j}, {z}\in{{\mathbb{T}}}$ and the kernel ${{\mathpzc{L}}}^{-1}_{{c}}({z})$ and ${{\mathpzc{L}}}_{{k}}({z})$, respectively, as stated by \eqref{FCL} for the (a) Dirichlet condition (rigid constraint) and (b) Neumann condition (crack) in square lattice corresponding to Fig. \ref{squareslitcrack_kernelFouriercoefficients}. Black curves represent the modulus and gray curves represent the imaginary part while dotted black curves represent the real part (on the vertical axis). The horizontal axis corresponds to ${\upxi}$ (with ${z}=e^{-i{\upxi}}$).}}
\label{squareslitcrack_kernelFourier}\end{figure}

In the foregoing context, the definitions stated by \cite{Hackbusch, Stevenson} are used to provide an operator theoretic formulation of \eqref{dWH} in the spirit of \eqref{cKinv}. Throughout this paper, the discrete spaces $\mathscr{H}^s= \mathscr{H}^s({\upepsilon}{\mathbb{Z}})$ for $s=+{\frac{1}{2}}$ and $-{\frac{1}{2}}$ are considered. See Appendix \ref{appS} for some more details on discrete {Sobolev} spaces \citep{Hackbusch}. The continuous counterparts are explicitly indicated such as $\mathscr{H}^{-{\frac{1}{2}}}({\mathbb{R}}^-)$. A straightforward results is the following
\begin{lemma}
Let 
$\mathcal{K}_{{\upepsilon}}:\mathscr{H}^{\mp{\frac{1}{2}}} \to \mathscr{H}^{\pm{\frac{1}{2}}}$
be a convolution operator with kernel ${\mathtt{k}}_{{\upepsilon}}$. If ${\mathtt{k}}_{{\upepsilon}}$ is such that ${\varpi^{\pm1}_{{\upepsilon}}}{\mathtt{k}}_{{\upepsilon}}^F\in\mathscr{C}^0(I)$, then $\mathcal{K}_{{\upepsilon}}$ is a bounded operator.
\label{lemma1} 
\end{lemma}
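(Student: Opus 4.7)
The strategy is to diagonalize $\mathcal{K}_{\upepsilon}$ via the discrete Fourier transform and then read off boundedness from a simple multiplier estimate. Recalling from Appendix~\ref{appS} the Fourier characterization of the discrete Sobolev norm, I would use that (up to equivalence of norms) $\|\mathpzc{x}\|^2_{\mathscr{H}^s}$ equals a weighted $L^2$-norm on the dual interval $I$,
\[
\|\mathpzc{x}\|^2_{\mathscr{H}^s} \;\asymp\; \int_I \varpi_{\upepsilon}^{2s}({\xi})\,|\mathpzc{x}^F({\xi})|^2\,d{\xi},
\]
where $\varpi_{\upepsilon}$ is the Sobolev weight appearing in the hypothesis. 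The discrete convolution theorem then converts $\mathcal{K}_{\upepsilon}\mathpzc{x}={\mathtt{k}}_{\upepsilon}\ast\mathpzc{x}$ into pointwise multiplication, $(\mathcal{K}_{\upepsilon}\mathpzc{x})^F = {\mathtt{k}}^F_{\upepsilon}\,\mathpzc{x}^F$.

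Next, I would plug this into the target norm and split the weight asymmetrically so that one factor attaches to ${\mathtt{k}}^F_{\upepsilon}$ and the other to $\mathpzc{x}^F$:
\[
\|\mathcal{K}_{\upepsilon}\mathpzc{x}\|^2_{\mathscr{H}^{\pm 1/2}} \;\asymp\; \int_I \bigl|\varpi_{\upepsilon}^{\pm 1}({\xi})\,{\mathtt{k}}^F_{\upepsilon}({\xi})\bigr|^2\;\varpi_{\upepsilon}^{\mp 1}({\xi})\,|\mathpzc{x}^F({\xi})|^2\,d{\xi}.
\]
By hypothesis $\varpi_{\upepsilon}^{\pm 1}{\mathtt{k}}^F_{\upepsilon}\in\mathscr{C}^0(I)$, and because $I$ is compact this continuous function attains a finite supremum $M_{\upepsilon}$. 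Pulling $M_{\upepsilon}^2$ out of the integral and recognizing what remains as $\|\mathpzc{x}\|^2_{\mathscr{H}^{\mp 1/2}}$ yields
\[
\|\mathcal{K}_{\upepsilon}\mathpzc{x}\|_{\mathscr{H}^{\pm 1/2}} \;\le\; C\,M_{\upepsilon}\,\|\mathpzc{x}\|_{\mathscr{H}^{\mp 1/2}},
\]
which is exactly boundedness of $\mathcal{K}_{\upepsilon}$ between the asserted spaces.

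The proof is essentially a textbook Mikhlin-style multiplier argument on a weighted $L^2$, so no real obstacle is expected. The only point requiring care is bookkeeping: one must verify that the exponent $\pm 1$ of the weight in the hypothesis is precisely the one demanded by the Sobolev exponent jump of $\pm 1$ between the domain $\mathscr{H}^{\mp 1/2}$ and codomain $\mathscr{H}^{\pm 1/2}$. This calibration is built into the very formulation of the lemma and follows immediately from the definitions of $\mathscr{H}^s({\upepsilon}{\mathbb{Z}})$ and $\varpi_{\upepsilon}$ recalled in Appendix~\ref{appS}; once it is noted, the estimate above closes the argument.
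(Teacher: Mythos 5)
Your proposal is correct and is essentially the paper's own argument: both pass to the discrete Fourier side, use the convolution theorem to reduce to a multiplier, split the weight as $\varpi_{{\upepsilon}}^{\pm\frac{1}{2}}={\varpi^{\pm1}_{{\upepsilon}}}\,\varpi_{{\upepsilon}}^{\mp\frac{1}{2}}$, and bound the factor ${\varpi^{\pm1}_{{\upepsilon}}}{\mathtt{k}}_{{\upepsilon}}^F$ by its maximum on the compact interval $I$, which exists by continuity. The weight calibration you flag is exactly the step the paper performs in \eqref{DS1}.
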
 
\begin{proof}
Suppose ${f}_{{\upepsilon}}\in\mathscr{H}^{\mp{\frac{1}{2}}}, {g}_{{\upepsilon}}\in\mathscr{H}^{\pm{\frac{1}{2}}}$ such that 
$$
({\mathtt{k}}_{{\upepsilon}}\ast {f}_{{\upepsilon}})({{\mathtt{x}}}{\upepsilon})=\sum_{j\in{\mathbb{Z}}}{\mathtt{k}}_{{\upepsilon}}(({\mathtt{x}}-j){\upepsilon}){f}_{{\upepsilon}}(j{\upepsilon})={g}_{{\upepsilon}}({{\mathtt{x}}}{\upepsilon}), {{\mathtt{x}}}\in{\mathbb{Z}}.
$$
Let 
\begin{eqn}
{\mathit{C}}_{{\upepsilon}}=\max_{{\upxi}\in I}|{\varpi^{\pm1}_{{\upepsilon}}}({\upxi}){\mathtt{k}}_{{\upepsilon}}^F({\upxi})|.
\label{Ceps}
\end{eqn}
Then,
\begin{eqn}
\norm{{g}_{{\upepsilon}}}_{\pm{\frac{1}{2}}}&=(\frac{{\upepsilon}}{2\pi})^{{\frac{1}{2}}}\norm{\varpi_{{\upepsilon}}^{\pm{\frac{1}{2}}}{g}_{{\upepsilon}}^F}_{{\mathscr{L}}_2(I)}=(\frac{{\upepsilon}}{2\pi})^{{\frac{1}{2}}}\norm{(\varpi_{{\upepsilon}}^{\pm1}{\mathtt{k}}_{{\upepsilon}}^F)(\varpi_{{\upepsilon}}^{\mp{\frac{1}{2}}}{f}_{{\upepsilon}}^F)}_{{\mathscr{L}}_2(I)}\\
&\le (\frac{{\upepsilon}}{2\pi})^{{\frac{1}{2}}}{\mathit{C}}_{{\upepsilon}}\norm{\varpi_{{\upepsilon}}^{\mp{\frac{1}{2}}}{f}_{{\upepsilon}}^F}_{{\mathscr{L}}_2(I)}={\mathit{C}}_{{\upepsilon}}\norm{{f}_{{\upepsilon}}}_{\mp{\frac{1}{2}}}.
\label{DS1}
\end{eqn}
\qed
\end{proof}
{
\begin{remark}
An example for the hypothesis on ${\mathtt{k}}_{{\upepsilon}}$ such that ${\varpi^{\pm1}_{{\upepsilon}}}{\mathtt{k}}_{{\upepsilon}}^F\in\mathscr{C}^0(I)$, corresponding to Lemma \ref{lemma1}, is ${\mathtt{k}}^{{\upepsilon}}_{{\mathtt{x}}}={\upepsilon}^{\pm1}2^{-|{\mathtt{x}}|}$. See Fig. \ref{squareslitcrack_kernelFouriercoefficients} for a graphical illustration of the coefficients in the context of this paper; Fig. \ref{squareslitcrack_kernelFourier} presents the corresponding Fourier partial sums for comparison with each kernel.
\end{remark}
}

In continuation to above statement, as a stronger statement, the following also holds.
\begin{lemma}
The operator $\mathcal{K}_{{\upepsilon}}$ is uniformly bounded as ${\upepsilon}\to0$ for both discrete Sommerfeld problems.
\label{lemma2} 
\end{lemma}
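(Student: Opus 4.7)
The plan is to reduce the claim to the pointwise bound provided by Lemma \ref{lemma1}: it suffices to show that the constants ${\mathit{C}}_{{\upepsilon}}=\max_{{\upxi}\in I}|\varpi^{\pm1}_{{\upepsilon}}({\upxi}){\mathtt{k}}_{{\upepsilon}}^F({\upxi})|$ remain bounded as ${\upepsilon}\to0$ in both cases. Using the closed-form Fourier symbols recorded in \eqref{FCL}--\eqref{HR}, the Dirichlet symbol is ${\mathtt{k}}_{{\upepsilon}}^F({\upxi})={\frac{1}{2}}{\upepsilon}\,{{\mathpzc{Q}}}({{z}})/({{\mathpzc{h}}}({{z}}){{\mathpzc{r}}}({{z}}))$ and the Neumann symbol is ${\mathtt{k}}_{{\upepsilon}}^F({\upxi})=-{\upepsilon}^{-1}{{\mathpzc{h}}}({{z}})/{{\mathpzc{r}}}({{z}})$, evaluated at ${z}=e^{-i{\upxi}}$, ${\upxi}={\upepsilon}{\xi}$. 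The Hackbusch weight $\varpi_{{\upepsilon}}$ is the scaled symbol of $(\operatorname{Id}-{\upepsilon}^{-2}\triangle_{1D})^{1/2}$, which in the Brillouin zone behaves like $\varpi_{{\upepsilon}}({\upxi})=(1+(2/{\upepsilon})^2\sin^2({\upxi}/2))^{1/2}$, hence scales as $|{\xi}|$ for $|{\upxi}|$ small and as $1/{\upepsilon}$ near ${\upxi}=\pm\pi$. The strategy is then to split $I$ into a fixed neighborhood of ${\upxi}=0$ and its complement and to bound $\varpi_{{\upepsilon}}^{\pm1}{\mathtt{k}}_{{\upepsilon}}^F$ on each piece uniformly in ${\upepsilon}$.

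For the Dirichlet case, in the neighborhood of ${\upxi}=0$, change variables to ${\xi}={\upxi}/{\upepsilon}$; by the expansion $2-2\cos{\upxi}\sim{\upxi}^2$ one has ${{\mathpzc{h}}}\sim{\upepsilon}\sqrt{{\xi}^2-{\omega}^2}$, ${{\mathpzc{r}}}\sim2$, ${{\mathpzc{Q}}}\sim2$, so ${\mathtt{k}}_{{\upepsilon}}^F\sim(1/2)({\xi}^2-{\omega}^2)^{-1/2}$ while $\varpi_{{\upepsilon}}\sim(1+{\xi}^2)^{1/2}$, and the product is bounded because ${\omega}_2>0$ keeps the branch point away from the real axis. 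Away from ${\upxi}=0$, ${{\mathpzc{h}}}({z})$ and ${{\mathpzc{r}}}({z})$ are smooth and bounded below, so ${\mathtt{k}}_{{\upepsilon}}^F=O({\upepsilon})$ while $\varpi_{{\upepsilon}}=O(1/{\upepsilon})$, giving a product of $O(1)$ uniformly. For the Neumann case, one multiplies instead by $\varpi_{{\upepsilon}}^{-1}$: near ${\upxi}=0$ one gets $-{\upepsilon}^{-1}{{\mathpzc{h}}}/{{\mathpzc{r}}}\sim-{\frac{1}{2}}\sqrt{{\xi}^2-{\omega}^2}$ versus $\varpi_{{\upepsilon}}^{-1}\sim(1+{\xi}^2)^{-1/2}$, with bounded quotient; away from ${\upxi}=0$ the kernel symbol scales like $1/{\upepsilon}$ and is compensated exactly by $\varpi_{{\upepsilon}}^{-1}\sim{\upepsilon}$. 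In each regime the choice of the powers of ${\upepsilon}$ in \eqref{dWHk} is precisely what turns the crude pointwise bounds into ${\upepsilon}$-independent ones, which explains the scaling remark following \eqref{dWHfull}.

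The main obstacle is not any one of these bounds in isolation but the patching: one must verify that the implicit constants in the local expansions of ${{\mathpzc{h}}}, {{\mathpzc{r}}}, {{\mathpzc{Q}}}$ can be chosen uniformly in ${\upepsilon}$ over a fixed annulus separating the two regimes, and that the branch choice \eqref{branch} remains compatible with these expansions for all sufficiently small ${\upepsilon}$ (so that the avoided singularity stays at a positive distance $\gtrsim{\omega}_2$ from the contour). Once this compactness/continuity argument on $[-\pi,\pi]$ for ${\upxi}$ is in place, taking the supremum yields a single ${\mathit{C}}=\sup_{{\upepsilon}\in(0,{\upepsilon}_0]}{\mathit{C}}_{{\upepsilon}}<\infty$, and Lemma \ref{lemma1} then delivers $\|\mathcal{K}_{{\upepsilon}}\|\le{\mathit{C}}$ uniformly in ${\upepsilon}$ for both discrete Sommerfeld problems.
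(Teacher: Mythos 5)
Your proposal is correct and follows essentially the same route as the paper: both reduce the claim to showing that the constant ${\mathit{C}}_{{\upepsilon}}$ from Lemma \ref{lemma1} is bounded uniformly in ${\upepsilon}$, by splitting $I$ into a neighbourhood of ${\upxi}=0$ (where the rescaling ${\upxi}={\upepsilon}{\xi}$ turns the symbol into $\tfrac12({\xi}^2-{\omega}^2)^{\mp1/2}$, matched against $\varpi_{{\upepsilon}}^{\pm1}\sim(1+{\xi}^2)^{\pm1/2}$, with ${\omega}_2>0$ keeping the branch point off the real axis) and the region away from $0$ (where the explicit powers of ${\upepsilon}$ in \eqref{dWHk} cancel the $O({\upepsilon}^{\mp1})$ behaviour of the weight). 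Your additional attention to the uniformity of the patching constants is a welcome elaboration of what the paper leaves implicit, but it is not a different argument.
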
 
\begin{proof}
As ${\upepsilon}\to0$ ($\ne0$), it is easy to see that ${\mathit{C}}_{{\upepsilon}}$ in the proof of Lemma \ref{lemma1} becomes independent of ${\upepsilon}$ for the choice of discrete convolution kernel ${\mathtt{k}}={\mathtt{k}}^{{\upepsilon}}$ for both boundary conditions \eqref{dWHk}. Indeed, for the discrete Dirichlet boundary condition $(D)$ in \eqref{dWHk}, and also using its counterpart in \eqref{FCL},
\begin{eqn}
{\mathit{C}}_{{\upepsilon}}&=\max_{{\upxi}\in I}|\varpi_{{\upepsilon}}({\upxi}){\frac{1}{2}}{\upepsilon}\frac{1}{{{\mathpzc{L}}}_{{c}}({z})}|\\
&=\max\bigg\{\max_{{\upxi}\in I}|\varpi_{{\upepsilon}}({\upxi})({\frac{1}{2}}{\upepsilon}\frac{1}{\sqrt{{\upxi}^2-{\upepsilon}^2{\omega}^2}}+o({\upepsilon}))|, |\frac{2}{{\upepsilon}}{\frac{1}{2}}{\upepsilon}\frac{1}{{{\mathpzc{L}}}_{{c}}(-1)}|\bigg\}
\le {\mathit{C}}\frac{1}{{\omega}_2}.
\label{DS12}
\end{eqn}
Similarly, for the discrete Neumann boundary condition $(N)$ in \eqref{dWHk}, using \eqref{FCL},
\begin{eqn}
{\mathit{C}}_{{\upepsilon}}&=\max_{{\upxi}\in I}|\varpi_{{\upepsilon}}^{-1}({\upxi})(-{\upepsilon}^{-1}{{\mathpzc{L}}_{{k}}}({z}))|\\
&=\max\bigg\{\max_{{\upxi}\in I}|\varpi_{{\upepsilon}}^{-1}({\upxi})(-{\upepsilon}^{-1}{\frac{1}{2}}\sqrt{{\upxi}^2-{\upepsilon}^2{\omega}^2}+O(1))|, |{\frac{1}{2}}{{\mathpzc{L}}}_{{k}}(-1)|\bigg\}\le {\mathit{C}}.
\label{DS2}
\end{eqn}
\qed
\end{proof}
For the discrete Sobolev space formulation of \eqref{dWH}, it is also required that ${\mathpzc{f}}^{{\upepsilon}}$ in \eqref{dWHf} lies in the appropriate discrete Sobolev space. Using ${z}=e^{-i{\upxi}}, {z}_{{P}}=e^{-i{\upkappa}_x},$ and adding zeros corresponding to the index ${\mathtt{x}}\in{\mathbb{Z}}^+$, for the discrete Dirichlet boundary condition in \eqref{dWHf},
\begin{eqn}
\varpi_{{\upepsilon}}^{+{\frac{1}{2}}}({\upxi})({\mathpzc{f}}^{{\upepsilon}})^F&=\varpi_{{\upepsilon}}^{+{\frac{1}{2}}}({\upxi})(\frac{1}{4}({\mathtt{c}}_0-(\frac{1}{{{\mathpzc{L}}}_{{c}}})_-{z}){{\mathtt{u}}}^{{t}}_{0, 0}+{\frac{1}{2}}\frac{1}{{z}_{{P}}{z}^{-1}-1}(1-2\sin^2{\frac{1}{2}}{\upkappa}_y(1-\frac{1}{{{\mathpzc{L}}}_{{c}}({z})}))),
\label{DSf1}
\end{eqn}
where $(\frac{1}{{{\mathpzc{L}}}_{{c}}})_-=\sum_{{\mathtt{x}}\in{\mathbb{Z}}^-}{\mathtt{c}}_{{\mathtt{x}}}{z}^{-{\mathtt{x}}}$ is analytic inside a disk of radius $e^{{\upkappa}_2}>1$ due to far field expansion of the square lattice Green's function \citep{Katsura, Martin}. In particular, $(\frac{1}{{{\mathpzc{L}}}_{{c}}})_-$ is bounded (since it is continuous) on ${\mathbb{T}}.$ Note that $\frac{1}{{{\mathpzc{L}}}_{{c}}}=(\frac{1}{{{\mathpzc{L}}}_{{c}}})_++(\frac{1}{{{\mathpzc{L}}}_{{c}}})_-$ is an additive Wiener--Hopf factorization of $\frac{1}{{{\mathpzc{L}}}_{{c}}}.$ Similarly, for the discrete Neumann boundary condition in \eqref{dWHf},
\begin{eqn}
\varpi_{{\upepsilon}}^{-{\frac{1}{2}}}({\upxi})({\mathpzc{f}}^{{\upepsilon}})^F&=-\varpi_{{\upepsilon}}^{-{\frac{1}{2}}}({\upxi}){\upepsilon}^{-1}(1-e^{i{\upkappa}_y})(1-{{\mathpzc{L}}}_{{k}}({z})){\frac{1}{2}}\frac{1}{{z}_{{P}}{z}^{-1}-1}.
\label{DSf2}
\end{eqn}
Since ${\mathpzc{h}}$ has a zero close to ${z}=1$ for small ${\upepsilon}$, the maximum value of $|(\frac{1}{{{\mathpzc{L}}}_{{c}}})_-|$ occurs near $0$. In fact, as ${\upepsilon}\to0$, using ${\upxi}={\upepsilon}{\xi}$, and the far field expansion of Green's function, it is easy to see that $(\frac{1}{{{\mathpzc{L}}}_{{c}}})_-({\upxi})=O(\frac{1}{{\upepsilon}})\sim\frac{{\mathit{C}}}{{\upxi}+{\upkappa}}+o(\frac{1}{{\upepsilon}})$. 

Around ${\upxi}=0$, in \eqref{DSf1}, it is found that 
\begin{eqn}
\varpi_{{\upepsilon}}^{+{\frac{1}{2}}}({\upxi})({\mathpzc{f}}^{{\upepsilon}})^F\sim(1+{\xi}^2)^{1/4}(O(\frac{1}{{\upepsilon}})\frac{1}{{\xi}+{\mathit{k}}}+{\frac{1}{2}}\frac{1}{i({\xi}-{\mathit{k}}_x){\upepsilon}}(1-{\frac{1}{2}}{\upepsilon}^2{\mathit{k}}_y^2(1-\frac{1}{{\upepsilon}\sqrt{{\xi}^2-{\omega}^2}})))\sim O(\frac{1}{{\upepsilon}}),
\label{DSf01}
\end{eqn}
and in \eqref{DSf2}
\begin{eqn}
\varpi_{{\upepsilon}}^{-{\frac{1}{2}}}({\upxi})({\mathpzc{f}}^{{\upepsilon}})^F\sim-(1+{\xi}^2)^{-1/4}({\upepsilon}^{-1}(-1)i{\upepsilon}{\mathit{k}}_y)(1-{\frac{1}{2}}{\upepsilon}\sqrt{{\xi}^2-{\omega}^2}){\frac{1}{2}}\frac{1}{i({\xi}-{\mathit{k}}_x){\upepsilon}}\sim O(\frac{1}{{\upepsilon}}).
\label{DSf02}
\end{eqn}
\begin{lemma}
${\mathpzc{f}}^{{\upepsilon}}\in{\mathscr{H}^{\pm{\frac{1}{2}}}}$. 
\label{lemma3} 
\end{lemma}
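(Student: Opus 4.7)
The plan is to verify the norm
$\norm{{\mathpzc{f}}^{{\upepsilon}}}_{\pm{\frac{1}{2}}}=(\tfrac{{\upepsilon}}{2\pi})^{{\frac{1}{2}}}\norm{\varpi_{{\upepsilon}}^{\pm{\frac{1}{2}}}({\mathpzc{f}}^{{\upepsilon}})^F}_{{\mathscr{L}}_2(I)}$ introduced in the proof of Lemma \ref{lemma1} is finite for each fixed ${\upepsilon}>0$, using the explicit symbol representations \eqref{DSf1} and \eqref{DSf2} that have already been derived. Since $I=[-\pi/{\upepsilon},\pi/{\upepsilon}]$ is a compact interval (after extending the discrete sequence by zeros on ${\mathbb{Z}}^+$), and since ${\varpi_{{\upepsilon}}^{\pm{\frac{1}{2}}}}$ is manifestly a continuous, positive function on $I$, it suffices to show that $({\mathpzc{f}}^{{\upepsilon}})^F$ is bounded on $I$.

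For the Dirichlet case, the three factors in \eqref{DSf1} are handled separately. First, ${\mathtt{c}}_0$ is a constant and $(1/{{\mathpzc{L}}}_{{c}})_-$, being the part of the Fourier series supported on ${\mathbb{Z}}^-$, converges absolutely on the closed disk of radius $e^{{\upkappa}_2}>1$ by the far-field decay of the square lattice Green's function (as already noted after \eqref{DSf1}), hence is continuous, in particular bounded, on ${{\mathbb{T}}}$. Second, the geometric factor $1/({z}_{{P}}{z}^{-1}-1)$ has a pole only where ${\upxi}={\upkappa}_x$; because ${\upkappa}_2=\Im{\upkappa}>0$ by \eqref{complexk}, this pole lies off the real axis, so $|{z}_{{P}}{z}^{-1}-1|$ is bounded below by a positive constant on $I$. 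Third, $1/{{\mathpzc{L}}}_{{c}}({z})={{\mathpzc{Q}}}({z})/({{\mathpzc{h}}}({z}){{\mathpzc{r}}}({z}))$ is continuous on ${{\mathbb{T}}}$ since the branch points of ${{\mathpzc{h}}}$ and ${{\mathpzc{r}}}$ are pushed off ${{\mathbb{T}}}$ when ${\omega}_2>0$ by the branch convention \eqref{branch} (${\sigma}_S\neq0$ on ${{\mathbb{T}}}$). The product of three bounded functions is bounded, so $\varpi_{{\upepsilon}}^{+{\frac{1}{2}}}({\mathpzc{f}}^{{\upepsilon}})^F\in{\mathscr{L}}_2(I)$.

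For the Neumann case, the argument is analogous but simpler: in \eqref{DSf2}, the factor $1-{{\mathpzc{L}}}_{{k}}({z})$ is continuous on ${{\mathbb{T}}}$ by the same branch analysis, and $1/({z}_{{P}}{z}^{-1}-1)$ is again bounded on $I$ because ${\upkappa}_2>0$ keeps its pole off the unit circle. Multiplying by the continuous weight $\varpi_{{\upepsilon}}^{-{\frac{1}{2}}}$ and the bounded prefactor ${\upepsilon}^{-1}(1-e^{i{\upkappa}_y})$ (a number, not a function of ${\upxi}$), we again obtain an ${\mathscr{L}}_2(I)$ function. Hence ${\mathpzc{f}}^{{\upepsilon}}\in\mathscr{H}^{\pm{\frac{1}{2}}}$ in the appropriate case.

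The main obstacle I anticipate is the behaviour near ${\upxi}={\upkappa}_x$: the simple pole of $1/({z}_{{P}}{z}^{-1}-1)$ is not integrable in squared modulus if ${\upkappa}_2=0$, so some care is required to record that the hypothesis ${\omega}_2>0$, via the dispersion relation \eqref{Sdispersion} and the definition \eqref{complexk}, forces ${\upkappa}_2>0$ and therefore produces a strictly positive distance from the real line to the singular point. The asymptotic estimates \eqref{DSf01}--\eqref{DSf02} are not used here (they are needed for the uniform statements of the next section), but they serve as a sanity check that $({\mathpzc{f}}^{{\upepsilon}})^F$ blows up at worst like $O(1/{\upepsilon})$ and no faster, consistent with finiteness of the weighted $L^2$ norm for each fixed ${\upepsilon}$.
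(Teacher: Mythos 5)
Your argument is correct for the statement as literally written, but it takes a genuinely different and strictly weaker route than the paper. You fix $\upepsilon>0$ and show the symbol $({\mathpzc{f}}^{{\upepsilon}})^F$ is bounded on the compact interval $I$ (note: in the paper's convention \eqref{dSSnorm} one has $I=[-\pi,\pi]$, not $[-\pi/{\upepsilon},\pi/{\upepsilon}]$), so the weighted ${\mathscr{L}}_2$ norm is finite; this does give membership in $\mathscr{H}^{\pm{\frac{1}{2}}}$, essentially because for each fixed ${\upepsilon}$ the discrete Sobolev norms are all equivalent to $\ell_2$, a fact the paper itself records. The paper's proof instead establishes the quantitative bound $\norm{{\mathpzc{f}}^{{\upepsilon}}}_{\pm{\frac{1}{2}}}\le {\mathit{C}}$ with ${\mathit{C}}$ \emph{independent of} ${\upepsilon}$: it splits the integral over $I$ at $|{\upxi}|=\sqrt{{\upepsilon}}$, rescales ${\upxi}={\upepsilon}{\xi}$ on the inner piece, and inserts the asymptotics \eqref{DSf01}--\eqref{DSf02} to arrive at \eqref{DSf1bound} and \eqref{DSf2bound}. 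Your compactness argument yields a constant that degenerates as ${\upepsilon}\to0$ (the weight $\varpi_{{\upepsilon}}^{1/2}$ reaches size ${\upepsilon}^{-1/2}$ on $I$ and $({\mathpzc{f}}^{{\upepsilon}})^F=O(1/{\upepsilon})$ near ${\upxi}=0$), whereas the uniform bound is precisely what is reused downstream: the proof of Lemma \ref{lemma4} is carried out ``in the manner analogous to'' \eqref{DSf1bound} and \eqref{DSf2bound}, and Lemma \ref{lemma4} feeds directly into the main theorem. So although you correctly remark that \eqref{DSf01}--\eqref{DSf02} are needed ``for the uniform statements,'' that need already arises at this lemma if it is to serve its purpose in the paper; a reader following your route would have to redo the estimate. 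Your treatment of the pole of $1/({z}_{{P}}{z}^{-1}-1)$ is at the same level of rigor as the paper's: it is off ${{\mathbb{T}}}$ only when the incidence angle is not normal (so that $\Im{\upkappa}_x\neq0$), a degeneracy the paper acknowledges and sets aside in Remark \ref{remexpon}.
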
 
\begin{remark}
Recall the last sentence of the notation, so that, in above lemma, $+$ refers to discrete Dirichlet (D) and $-$ refers to discrete Neumann (N), in \eqref{dWHf}.
\end{remark}
\begin{proof}
Using \eqref{DSf01},
\begin{eqn}
\norm{{\mathpzc{f}}^{{\upepsilon}}}_{+{\frac{1}{2}}}^2&=(\frac{{\upepsilon}}{2\pi})\norm{\varpi_{{\upepsilon}}^{+{\frac{1}{2}}}{\mathpzc{f}}_{{\upepsilon}}^F}^2_{{\mathscr{L}}_2(I)}=(\frac{{\upepsilon}}{2\pi})\int_{-\pi}^{+\pi}|\varpi_{{\upepsilon}}^{+{\frac{1}{2}}}{\mathpzc{f}}_{{\upepsilon}}^F|^2({\upxi})d{\upxi}\\
&=(\frac{{\upepsilon}}{2\pi})(\int_{|{\upxi}|>\sqrt{{\upepsilon}}}|\varpi_{{\upepsilon}}^{+{\frac{1}{2}}}{\mathpzc{f}}_{{\upepsilon}}^F|^2({\upxi})d{\upxi}+\int_{-\sqrt{{\upepsilon}}}^{+\sqrt{{\upepsilon}}}|\varpi_{{\upepsilon}}^{+{\frac{1}{2}}}{\mathpzc{f}}_{{\upepsilon}}^F|^2({\upxi})d{\upxi})\\
&\le {\upepsilon}({\mathit{C}}_1\frac{1}{{{\upepsilon}}}+{\upepsilon}\int_{-1/\sqrt{{\upepsilon}}}^{+1/\sqrt{{\upepsilon}}}|\varpi_{{\upepsilon}}^{+{\frac{1}{2}}}{\mathpzc{f}}_{{\upepsilon}}^F|^2({\upepsilon}{\xi})d{\xi})\\
&\le {\upepsilon}({\mathit{C}}_1\frac{1}{{{\upepsilon}}}+{\mathit{C}}_2\int_{-1/\sqrt{{\upepsilon}}}^{+1/\sqrt{{\upepsilon}}}\frac{1}{\sqrt{1+{\xi}^2}}d{\xi})\\&\le{\mathit{C}}(1+{\upepsilon}\sinh^{-1}\frac{1}{\sqrt{{\upepsilon}}})
\le{\mathit{C}}.
\label{DSf1bound}
\end{eqn}
Similarly, using \eqref{DSf02},
\begin{eqn}
\norm{{\mathpzc{f}}^{{\upepsilon}}}_{-{\frac{1}{2}}}^2
&=(\frac{{\upepsilon}}{2\pi})(\int_{|{\upxi}|>\sqrt{{\upepsilon}}}|\varpi_{{\upepsilon}}^{-{\frac{1}{2}}}{\mathpzc{f}}_{{\upepsilon}}^F|^2({\upxi})d{\upxi}+\int_{-\sqrt{{\upepsilon}}}^{+\sqrt{{\upepsilon}}}|\varpi_{{\upepsilon}}^{-{\frac{1}{2}}}{\mathpzc{f}}_{{\upepsilon}}^F|^2({\upxi})d{\upxi})\\
&\le {\upepsilon}({\mathit{C}}_1\frac{1}{{{\upepsilon}}}+{\mathit{C}}_2\int_{-1/\sqrt{{\upepsilon}}}^{+1/\sqrt{{\upepsilon}}}\frac{1}{(1+{\xi}^2)\sqrt{1+{\xi}^2}}d{\xi})
\le{\mathit{C}}.
\label{DSf2bound}
\end{eqn}
\qed
\end{proof}

In the context of the promised theme of this paper, in order to move further, an identification is required between the  continuous and discrete formulation. This is accomplished by a suitable definition of prolongation and restriction operators {\citep{Stevenson}}. 
 \begin{remark}
Since the elements of the associated function spaces for the continuum model are, typically, measurable functions, the point wise values are not directly meaningful. However, using a dense subset the notion can be worked out. The details are omitted as they can be found in standard texts on Sobolev spaces \citep{Adams}. 
\end{remark}
Let 
\begin{eqn}
\mathcal{R}_{{\upepsilon}}: {\mathscr{H}^{\mp{\frac{1}{2}}}}({\mathbb{R}})\to {\mathscr{H}^{\mp{\frac{1}{2}}}}
\label{Rlen}
\end{eqn}
denote the restriction operator. 
{The restriction operator is the same as that defined by \cite{Stevenson} (cf. definitions 3.11 and 3.13). There are some obvious and straightforward instances of the same for smooth functions.}
For example, in the context of the expression provided in \eqref{cfx}, $(\mathcal{R}_{{\upepsilon}}{\mathpzc{f}})_{{\mathtt{x}}}={\mathpzc{f}}({{\mathtt{x}}}{\upepsilon}), {{\mathtt{x}}}\in{\mathbb{Z}}^-.$ 
Further, by \eqref{dWHf},
\begin{eqn}
{\mathpzc{f}}^{{\upepsilon}}_{{\mathtt{x}}}-(\mathcal{R}_{{\upepsilon}}{\mathpzc{f}})_{{\mathtt{x}}}
&=\begin{dcases}
\frac{1}{4}(\delta_{{\mathtt{x}}+1, 0}-{\mathtt{c}}^{{\upepsilon}}_{{\mathtt{x}}+1}){{\mathtt{u}}}^{{t}}_{0, 0}+{\frac{1}{2}}(1-\cos{\upepsilon}{\mathit{k}}_y)(-{{\mathtt{u}}}^{{i}}_{{\mathtt{x}}, 0}+({\mathtt{c}}^{{\upepsilon}}\ast{{\mathtt{u}}}^{{i}}_{\cdot, 0})_{{\mathtt{x}}})&\quad\text{\rm (D)},\\
-{\frac{1}{2}}{\upepsilon}^{-1}(1+i{\upepsilon}{\mathit{k}}_y-e^{i{\upepsilon}{\mathit{k}}_y}){{\mathtt{u}}}^{{i}}_{{\mathtt{x}}, 0}+{\frac{1}{2}}{\upepsilon}^{-1}(1-e^{i{\upepsilon}{\mathit{k}}_y})({\mathtt{c}}^{{\upepsilon}}\ast{{\mathtt{u}}}^{{i}}_{\cdot, 0})_{{\mathtt{x}}}&\quad\text{\rm (N).}
\end{dcases}
\label{DSferrorbound}
\end{eqn}
In \eqref{DSferrorbound}, ${\mathtt{c}}$ is given by \eqref{dWHc}. Around ${\upxi}=0$, in the Dirichlet case of \eqref{DSferrorbound}, 
using the expression of the solution provided by \cite{Bls1}, ${{\mathtt{u}}}^{{t}}_{0, 0}\sim\sqrt{{\upepsilon}}$ as ${\upepsilon}\to0$, so that
\begin{eqn}
{\mathpzc{f}}^{{\upepsilon}}_{{\mathtt{x}}}-(\mathcal{R}_{{\upepsilon}}{\mathpzc{f}})_{{\mathtt{x}}}&\sim(1+{\xi}^2)^{1/4}(O(\frac{1}{\sqrt{{\upepsilon}}})\frac{1}{{\xi}+{\mathit{k}}}+{\frac{1}{2}}\frac{1}{i({\xi}-{\mathit{k}}_x){\upepsilon}}(-{\frac{1}{2}}{\upepsilon}^2{\mathit{k}}_y^2(1-\frac{1}{{\upepsilon}\sqrt{{\xi}^2-{\omega}^2}})))\\
&\sim O(\frac{1}{\sqrt{{\upepsilon}}}).
\label{DSf01errorbound}
\end{eqn}
Similarly, in the case of the Neumann condition of \eqref{DSferrorbound},
\begin{eqn}
{\mathpzc{f}}^{{\upepsilon}}_{{\mathtt{x}}}-(\mathcal{R}_{{\upepsilon}}{\mathpzc{f}})_{{\mathtt{x}}}\sim-(1+{\xi}^2)^{-1/4}({\upepsilon}^{-1}(-1)i{\upepsilon}{\mathit{k}}_y)(1-{\frac{1}{2}}{\upepsilon}\sqrt{{\xi}^2-{\omega}^2}-1){\frac{1}{2}}\frac{1}{i({\xi}-{\mathit{k}}_x){\upepsilon}}\sim O(1).
\label{DSf02errorbound}
\end{eqn}
Using the estimates in the manner analogous to that employed for $\norm{{\mathpzc{f}}^{{\upepsilon}}}_{\pm{\frac{1}{2}}}$ in \eqref{DSf1bound} and \eqref{DSf2bound}, the following lemma follows from \eqref{DSf01errorbound} and \eqref{DSf02errorbound}.
\begin{lemma}\begin{eqn}
\norm{{\mathpzc{f}}^{{\upepsilon}}-\mathcal{R}_{{\upepsilon}}{\mathpzc{f}}}^2_{\pm{\frac{1}{2}}}<{\mathit{C}}\begin{dcases}\sqrt{{\upepsilon}}&\text{\rm (D)},\\{\upepsilon}&\text{\rm \rm (N),}\end{dcases}
\label{ferrorbound}
\end{eqn}
where ${\mathpzc{f}}^{{\upepsilon}}$ is given by \eqref{dWHf} and ${\mathpzc{f}}$ by \eqref{cfx}.
\label{lemma4} 
\end{lemma}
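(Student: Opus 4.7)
The plan is to mirror the splitting strategy already used in the proof of Lemma~\ref{lemma3}, exploiting the cancellation that the subtraction of $\mathcal{R}_{{\upepsilon}}{\mathpzc{f}}$ in \eqref{DSferrorbound} achieves near ${\upxi}=0$. Specifically, the pointwise asymptotics \eqref{DSf01errorbound} and \eqref{DSf02errorbound} replace the $O(1/{\upepsilon})$ singular growth of $\varpi_{{\upepsilon}}^{\pm{\frac{1}{2}}}({\mathpzc{f}}^{{\upepsilon}})^{F}$ near ${\upxi}=0$ exhibited in \eqref{DSf01}, \eqref{DSf02}, by the tamer $O(1/\sqrt{{\upepsilon}})$ in the Dirichlet case and $O(1)$ in the Neumann case; the resulting gained factor $\sqrt{{\upepsilon}}$, respectively ${\upepsilon}$, after squaring and combining with the ${\upepsilon}$-prefactor in the discrete Sobolev norm, is precisely the right hand side of \eqref{ferrorbound}.

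Concretely I would write $\norm{{\mathpzc{f}}^{{\upepsilon}}-\mathcal{R}_{{\upepsilon}}{\mathpzc{f}}}_{\pm{\frac{1}{2}}}^{2}=\frac{{\upepsilon}}{2\pi}\int_{-\pi}^{\pi}|\varpi_{{\upepsilon}}^{\pm{\frac{1}{2}}}({\mathpzc{f}}^{{\upepsilon}}-\mathcal{R}_{{\upepsilon}}{\mathpzc{f}})^{F}|^{2}d{\upxi}$, and split the domain of integration into $|{\upxi}|>\sqrt{{\upepsilon}}$ and $|{\upxi}|\le\sqrt{{\upepsilon}}$ exactly as in \eqref{DSf1bound}--\eqref{DSf2bound}. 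On the outer region, the small prefactors appearing in \eqref{DSferrorbound}, namely $(1-\cos{\upepsilon}{\mathit{k}}_{y})=O({\upepsilon}^{2})$ for (D) and $(1+i{\upepsilon}{\mathit{k}}_{y}-e^{i{\upepsilon}{\mathit{k}}_{y}})=O({\upepsilon}^{2})$, $(1-e^{i{\upepsilon}{\mathit{k}}_{y}})=O({\upepsilon})$ for (N), combine with the uniform boundedness of $1/{{\mathpzc{L}}}_{{c}}$ and ${{\mathpzc{L}}}_{{k}}$ on the arc $\{|{\upxi}|>\sqrt{{\upepsilon}}\}\cap{{\mathbb{T}}}$, and with the tip-value scaling ${{\mathtt{u}}}^{{t}}_{0,0}=O(\sqrt{{\upepsilon}})$ from \cite{Bls1}, to bound the integrand uniformly by a quantity of order $1/{\upepsilon}$; the prefactor $\frac{{\upepsilon}}{2\pi}$ then already absorbs the required power of ${\upepsilon}$.

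For the inner region I would perform the rescaling ${\upxi}={\upepsilon}{\xi}$, so that $d{\upxi}={\upepsilon}\,d{\xi}$ and $\varpi_{{\upepsilon}}({\upxi})\sim\sqrt{1+{\xi}^{2}}$, and substitute the pointwise estimates \eqref{DSf01errorbound}, \eqref{DSf02errorbound}. Up to the Sobolev weight this bounds the squared amplitude by $\frac{{\mathit{C}}}{\sqrt{1+{\xi}^{2}}}$ for (D) (where the $O(1/\sqrt{{\upepsilon}})$ size contributes $1/{\upepsilon}$ after squaring and cancels one factor of ${\upepsilon}$ from the Jacobian/norm prefactor) and $\frac{{\mathit{C}}}{(1+{\xi}^{2})^{3/2}}$ for (N). Integration over $|{\xi}|<1/\sqrt{{\upepsilon}}$ gives ${\upepsilon}\sinh^{-1}(1/\sqrt{{\upepsilon}})=O(\sqrt{{\upepsilon}})$ in (D) and an $O({\upepsilon})$ contribution in (N), following the template already employed in \eqref{DSf1bound}, \eqref{DSf2bound}; combining with the outer region estimate yields the two branches of \eqref{ferrorbound}.

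The main obstacle will be upgrading the asymptotic relations \eqref{DSf01errorbound}, \eqref{DSf02errorbound} to \emph{uniform} pointwise bounds across the entire window $|{\upxi}|\le\sqrt{{\upepsilon}}$, rather than only asymptotic relations at ${\upxi}=0$. The delicate piece is the Dirichlet symbol: one must justify replacing $(\frac{1}{{{\mathpzc{L}}}_{{c}}})_{-}({\upxi})$ by its leading simple pole $\frac{{\mathit{C}}}{{\upxi}+{\upkappa}}$ with a controlled $o(1/{\upepsilon})$ remainder uniform on $|{\upxi}|\le\sqrt{{\upepsilon}}$, using the far-field expansion of the square lattice Green's function \cite{Katsura,Martin}, and pairing this with the tip-displacement scaling ${{\mathtt{u}}}^{{t}}_{0,0}=O(\sqrt{{\upepsilon}})$ borrowed from \cite{Bls1}, so that the contribution of $\frac{1}{4}(\delta_{{\mathtt{x}}+1,0}-{\mathtt{c}}^{{\upepsilon}}_{{\mathtt{x}}+1}){{\mathtt{u}}}^{{t}}_{0,0}$ to the Fourier symbol is genuinely $O(1/\sqrt{{\upepsilon}})$ and not $O(1/{\upepsilon})$. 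Once this uniform bound is in hand, the one-variable integrals collapse exactly as in the proof of Lemma~\ref{lemma3} and the claim follows.
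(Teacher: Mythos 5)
Your proposal follows essentially the same route as the paper: the paper's proof of Lemma~\ref{lemma4} is a single sentence asserting that the bound follows from \eqref{DSf01errorbound} and \eqref{DSf02errorbound} by repeating the domain splitting and rescaling used for \eqref{DSf1bound}--\eqref{DSf2bound}, which is exactly the argument you carry out in detail. The one caveat is your outer-region bookkeeping: bounding the integrand there merely by $O(1/{\upepsilon})$ and letting the prefactor $\frac{{\upepsilon}}{2\pi}$ ``absorb'' it yields only $O(1)$, whereas the claimed bound requires the outer contribution itself to be $O(\sqrt{{\upepsilon}})$ for (D) and $O({\upepsilon})$ for (N); the $O({\upepsilon}^2)$ prefactors in \eqref{DSferrorbound} and the tip scaling ${{\mathtt{u}}}^{{t}}_{0,0}=O(\sqrt{{\upepsilon}})$ that you already invoke do supply the missing powers, so this is a slip in the stated constant rather than a gap in the method.
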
 
\begin{remark}
In the context of \eqref{Rlen}, the question answered in this paper is that ${\mathpzc{x}}^{{\upepsilon}}$, ($=\{{\mathpzc{x}}^{{\upepsilon}}_{{\mathtt{x}}}\}_{{\mathtt{x}}=-1}^{-\infty}$) that satisfies \eqref{dWH}, provides an approximation of $\mathcal{R}_{{\upepsilon}}{\mathpzc{x}}$ where ${\mathpzc{x}}$ satisfies \eqref{cWH2}. 
\end{remark}
Let
\begin{eqn}
\mathcal{P}_{{\upepsilon}}: {\mathscr{H}^{\mp{\frac{1}{2}}}}\to {\mathscr{H}^{\mp{\frac{1}{2}}}}({\mathbb{R}})
\label{Plen}
\end{eqn}
stand for a suitable prolongation operator, i.e. an operator which is defined in a direction opposite to that of $\mathcal{R}_{{\upepsilon}}$ such that their composition satisfies an approximate identity map.

The existence and uniqueness of the solution of the discrete Sommerfeld problems in discrete Sobolev space, analogous to the continuous case, follows from the results stated by \cite{Hackbusch}. 
This is due to the following two facts.
\begin{enumerate}
\item The existence and uniqueness of the solution in $\ell_2$ of the discrete Wiener--Hopf equation, when ${\omega}_2>0$, for the discrete Sommerfeld problems, is established by \cite{Bls2, Bls3} 
where it is shown that the 
relevant Toeplitz operator is invertible on $\ell_2$ using the results of \cite{Krein, Widom1} and verification of the Krein conditions.
\item For any $s\in{\mathbb{R}}$ and positive ${\upepsilon}$, the discrete Sobolev space ${\mathscr{H}^s}$ is equivalent to $\ell_2({\mathbb{Z}}).$
\end{enumerate}
Hence, analogous to \eqref{cKinv},
\begin{eqn}
&\text{for any positive }{\upepsilon}, \text{the operator } \mathcal{K}^{{\upepsilon}}: {\mathscr{H}^{\mp{\frac{1}{2}}}}\to{\mathscr{H}^{\pm{\frac{1}{2}}}}, \text{ defined by }\eqref{dWH} \text{ and }\eqref{dWHfull},\\
&\text{ is bijective and continuous for each of the two discrete Sommerfeld problems.}
\label{dKinv}
\end{eqn}
Thus, by \eqref{cKinv}, $\norm{\mathcal{K}}<\infty$ and \eqref{dKinv} implies $\norm{\mathcal{K}^{{\upepsilon}}}<\infty$ where $\norm{\cdot}$ refers to the corresponding operator norm. 

\begin{remark}
Before going over to the main result stated and proved in the next section, it is pertinent to recall \citep{Lax} that a sesquilinear form (on a Hilbert space $H$) is called bounded iff there is a $C > 0$ such that
\begin{eqn}
|a(x, y)| \le C\norm{x}\norm{y}, x, y \in H.
\label{Boundcond}
\end{eqn}
Also recall that sesquilinear form is called $H$-elliptic (or coercive) iff there is a $c > 0$ such that
\begin{eqn}
\Re a(x, x) \ge c\norm{x}^2, x \in H.
\label{Coerccond}
\end{eqn}
\end{remark}
For any $u_{{\upepsilon}}$, with a fixed choice of $s={\mp{\frac{1}{2}}},$ the pairing $\langle\mathcal{K}^{{\upepsilon}}u_{{\upepsilon}}, v_{{\upepsilon}}\rangle$ defines a bounded sesquilinear form on ${\mathscr{H}^{s}}$ (for every $v_{{\upepsilon}}\in{\mathscr{H}^{s}}$). 
But,
\begin{eqn}
\Re\langle\mathcal{K}^{{\upepsilon}}v_{{\upepsilon}}, v_{{\upepsilon}}\rangle&=\Re{\upepsilon}\int_I (\mathcal{K}^{{\upepsilon}}v_{{\upepsilon}})^F({\upxi})\overline{v_{{\upepsilon}}^F({\upxi})}d{\upxi}\\
&\ge\frac{{\mathit{C}}}{\alpha}{\upepsilon}\int_I \varpi_{{\upepsilon}}^{2s}({\upxi})v_{{\upepsilon}}^F({\upxi})\overline{v_{{\upepsilon}}^F({\upxi})}d{\upxi}=\frac{{\mathit{C}}}{\alpha}\norm{v_{{\upepsilon}}}^2_s,
\label{CoercK}
\text{where }
\Re({\mathtt{k}}^{{\upepsilon}})^F({\upxi})\ge\alpha, {\upxi}\in I.
\end{eqn}
In fact, \eqref{branch} implies that $\alpha={\omega}_2^{\pm1}$ (independent of ${\upepsilon}$). Thus, by the Lax--Milgram theorem \citep{Lax}, $\mathcal{K}^{{\upepsilon}}$ is invertible in the limit ${\upepsilon}\to0^+$. 
{The provided statement, thus, corroborates the statement \eqref{dKinv} and also gives an explicit bound which can be used in the proof of the main result.}
The next section presents a proof of the main theorem in this paper. A numerical illustration also appears in the end.

\section{Main result} 
\begin{theorem}
For ${\omega}_2>0$, 
\begin{eqn}
\norm{{\mathpzc{x}}^{{\upepsilon}}-\mathcal{R}_{{\upepsilon}}{\mathpzc{x}}}_{\mp{\frac{1}{2}}}\to0\text{ as }{\upepsilon}\to0,
\label{maineqn}
\end{eqn}
where ${\mathpzc{x}}^{{\upepsilon}}$ is the solution of 
\eqref{dWH} and ${\mathpzc{x}}$ is the solution of 
\eqref{cWH2}. 
\label{uDCthm} 
\end{theorem}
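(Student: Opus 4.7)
The plan is a standard stability-plus-consistency argument in the spirit of the First Strang Lemma. Writing $e_{{\upepsilon}}:={\mathpzc{x}}^{{\upepsilon}}-\mathcal{R}_{{\upepsilon}}{\mathpzc{x}}$ and using that $\mathcal{K}{\mathpzc{x}}={\mathpzc{f}}$ and $\mathcal{K}^{{\upepsilon}}{\mathpzc{x}}^{{\upepsilon}}={\mathpzc{f}}^{{\upepsilon}}$, one obtains
\begin{equation*}
\mathcal{K}^{{\upepsilon}} e_{{\upepsilon}} \;=\; \bigl({\mathpzc{f}}^{{\upepsilon}}-\mathcal{R}_{{\upepsilon}}{\mathpzc{f}}\bigr) \;+\; \bigl(\mathcal{R}_{{\upepsilon}}\mathcal{K}{\mathpzc{x}} - \mathcal{K}^{{\upepsilon}}\mathcal{R}_{{\upepsilon}}{\mathpzc{x}}\bigr).
\end{equation*}
If a uniform-in-${\upepsilon}$ bound on $\norm{(\mathcal{K}^{{\upepsilon}})^{-1}}_{\mathscr{H}^{\pm 1/2}\to\mathscr{H}^{\mp 1/2}}$ is available, then it suffices to prove that both summands on the right tend to zero in $\mathscr{H}^{\pm 1/2}$.

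\textbf{Stability piece.} The uniform invertibility is already essentially in place: the coercivity estimate \eqref{CoercK}, with coercivity constant $\alpha={\omega}_2^{\pm 1}$ independent of ${\upepsilon}$, combined with the Lax--Milgram theorem applied to the sesquilinear form $\langle\mathcal{K}^{{\upepsilon}}u_{{\upepsilon}},v_{{\upepsilon}}\rangle$, gives $\norm{(\mathcal{K}^{{\upepsilon}})^{-1}}\le {\mathit{C}}^{-1}{\omega}_2^{\mp 1}$ uniformly in ${\upepsilon}>0$. This lets the right-hand side norms be passed through $(\mathcal{K}^{{\upepsilon}})^{-1}$ without loss.

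\textbf{Data consistency.} The first term $\norm{{\mathpzc{f}}^{{\upepsilon}}-\mathcal{R}_{{\upepsilon}}{\mathpzc{f}}}_{\pm 1/2}$ is exactly what Lemma \ref{lemma4} bounds, yielding $O({\upepsilon}^{1/4})$ in the Dirichlet case and $O({\upepsilon}^{1/2})$ in the Neumann case. The second term is the operator-consistency error. Passing to the Fourier side via Parseval, and setting ${\upxi}={\upepsilon}{\xi}$, it decomposes into (i) the difference between the discrete symbol $({\mathtt{k}}^{{\upepsilon}})^F({\upxi})$ and the continuous symbol ${\mathpzc{k}}^F({\xi})$ acting on $(\mathcal{R}_{{\upepsilon}}{\mathpzc{x}})^F$, and (ii) the aliasing error between $(\mathcal{R}_{{\upepsilon}}\mathcal{K}{\mathpzc{x}})^F$ and the restriction to $I=(-\pi,\pi)$ of the periodization of $({\mathpzc{k}}^F\cdot{\mathpzc{x}}^F)$. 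For (i) the pointwise asymptotics \eqref{kernellimslit}--\eqref{kernellimcrack} show that the symbol difference is $o(1)$ uniformly on every bounded ${\xi}$-window; the estimate is then closed by the same window split $|{\upxi}|\lessgtr\sqrt{{\upepsilon}}$ used in \eqref{DSf1bound}--\eqref{DSf2bound}, combined with the weight $\varpi_{{\upepsilon}}^{\pm 1/2}$ to absorb the remaining Sobolev decay. For (ii) one uses that ${\mathpzc{x}}\in\mathscr{H}^{\mp 1/2}({\mathbb{R}}^-)$ together with the fact that the tails $|{\xi}|>\pi/{\upepsilon}$ of $({\mathpzc{k}}^F\cdot{\mathpzc{x}}^F)$ are square-integrable with weight $(1+{\xi}^2)^{\pm 1/2}$, giving a vanishing aliasing contribution as ${\upepsilon}\to 0$.

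\textbf{Main obstacle.} The delicate step is the operator-consistency estimate (ii): $\mathcal{K}{\mathpzc{x}}$ has only $\mathscr{H}^{\pm 1/2}$-regularity, the continuous kernel is logarithmic (D) or hypersingular (N), and the periodization picks up a $2\pi/{\upepsilon}$-shifted tail that one must offset against the weight $\varpi_{{\upepsilon}}^{\pm 1/2}$. The remedy is precisely the low/high ${\upxi}$ band split already exploited in Lemma \ref{lemma3}: on $|{\upxi}|<\sqrt{{\upepsilon}}$ use the symbol asymptotics \eqref{kernellimslit}--\eqref{kernellimcrack} to gain a power of ${\upepsilon}$, and on $|{\upxi}|>\sqrt{{\upepsilon}}$ use the Sobolev integrals $\int\varpi_{{\upepsilon}}^{\pm 1}(1+{\xi}^2)^{\mp 1/2}\,d{\xi}$ that were shown to be finite in \eqref{DSf1bound}--\eqref{DSf2bound}. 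Combining Lemma \ref{lemma4} with this consistency bound and the uniform Lax--Milgram stability then yields $\norm{e_{{\upepsilon}}}_{\mp 1/2}\to 0$, completing the proof.
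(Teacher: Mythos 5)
Your overall architecture coincides with the paper's: the same splitting $\mathcal{K}^{{\upepsilon}}e_{{\upepsilon}}=({\mathpzc{f}}^{{\upepsilon}}-\mathcal{R}_{{\upepsilon}}{\mathpzc{f}})+(\mathcal{R}_{{\upepsilon}}{\mathpzc{f}}-\mathcal{K}^{{\upepsilon}}\mathcal{R}_{{\upepsilon}}{\mathpzc{x}})$, the same use of Lemma \ref{lemma4} for the data term, and the same uniform stability via the coercivity estimate \eqref{CoercK} and Lax--Milgram. Where you genuinely diverge is the operator-consistency term. The paper stays in physical space: it splits the continuous kernel as ${\mathpzc{k}}={\mathpzc{k}}^{ns}+{\mathpzc{k}}^{sing}$ with ${\mathpzc{k}}^{sing}$ supported on a small set $S$, compares the discrete kernel ${\mathtt{k}}^{{\upepsilon}}$ with cell averages $\overline{{\mathtt{k}}}^{\,ns:{\upepsilon}}$ of ${\mathpzc{k}}^{ns}$ by a Riemann-sum/mean-value argument, inserts $\mathcal{P}_{{\upepsilon}}\mathcal{R}_{{\upepsilon}}$ as an approximate identity, and treats the singular part separately --- including a specific rebalancing (the parameter $\beta=\pi/4$, matched to ${\mathtt{k}}^{{\upepsilon}}_{0}\sim-{\frac{1}{2}}{\upepsilon}^{-1}$) to tame the hypersingular Neumann kernel, whose naive cell-average estimate gives an unacceptable $O({\upepsilon}^{-1})$. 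You instead propose a Fourier-side symbol-comparison plus aliasing estimate. That route is plausible in principle (the symbol asymptotics \eqref{kernellimslit}--\eqref{kernellimcrack} are exactly the right input, and for arguments supported on ${\mathbb{R}}^-$ the forward Wiener--Hopf action does admit a full-line convolution representation), and it would arguably be cleaner if completed, since it avoids the singular/non-singular kernel surgery.

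However, your proposal leaves the hardest step at the level of a plan, and three points need real work before it closes. First, $(\mathcal{R}_{{\upepsilon}}\mathcal{K}{\mathpzc{x}})^F$ is the periodization of ${\mathpzc{k}}^F{\mathpzc{x}}^F$ only if $\mathcal{R}_{{\upepsilon}}$ is point sampling of a sufficiently regular function; but $\mathcal{K}{\mathpzc{x}}\in\mathscr{H}^{-1/2}({\mathbb{R}}^-)$ in the Neumann case is not pointwise meaningful, so the restriction must be the averaged one of Stevenson, and the ``aliasing'' identity then acquires extra mollifier factors that must be tracked against the weight $\varpi_{{\upepsilon}}^{-1/2}$. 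Second, the symbol difference is not merely $o(1)$ on bounded $\xi$-windows against a harmless density: $(\mathcal{R}_{{\upepsilon}}{\mathpzc{x}})^F$ itself concentrates an $O(1/{\upepsilon})$ peak near ${\upxi}=0$ (exactly as in \eqref{DSf01}--\eqref{DSf02}), so the window split $|{\upxi}|\lessgtr\sqrt{{\upepsilon}}$ must be shown to beat that peak for the \emph{product} of the symbol error with this singular density --- this is the analogue of the paper's $\beta$-trick and is where the Neumann case can fail if done coarsely. Third, the high-frequency tail of ${\mathpzc{k}}^F{\mathpzc{x}}^F$ for Neumann grows like $|{\xi}|^{1/2}\cdot|{\mathpzc{x}}^F|$ with ${\mathpzc{x}}^F$ only in $(1+{\xi}^2)^{1/4}$-weighted $L^2$, so the folded tails are summable but only marginally; a dominated-convergence argument must be written out, not asserted. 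None of these is fatal, but as it stands the proposal proves the easy half of the theorem and names, without executing, the half that constitutes the actual content of the paper's proof.
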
 
\begin{remark}
Again, recall that, according to the notation stated in the beginning of the paper, the upper symbol in $\mp$ refers to Dirichlet (D) and lower to Neumann (N).
\end{remark}

\begin{remark}
A partial statement of Theorem above is stated as Theorem 4.4 of \citep{Bls2} corresponding to the part involving $(N)$.
\end{remark}

\begin{remark}
At this point, it is noteworthy that the statement 
without the assumption of dissipation, i.e., ${\upomega}_2=0$, is also an interesting aspect of the problem, but unresolved by the author as yet (partly because it has not been possible to tackle invertibility for the discrete problem when ${\upomega}_2=0$ \citep{Bls2, Bls3}). 
\end{remark}

\begin{proof}
Since ${\mathcal{K}^{{\upepsilon}}}$ is invertible, 
\begin{eqn}
{\mathpzc{x}}^{{\upepsilon}}-\mathcal{R}_{{\upepsilon}}{\mathpzc{x}}&={\mathcal{K}^{{\upepsilon}}}^{-1}({\mathpzc{f}}^{{\upepsilon}}-{\mathcal{R}_{{\upepsilon}}}{\mathpzc{f}})+{\mathcal{K}^{{\upepsilon}}}^{-1}({\mathcal{R}_{{\upepsilon}}}{\mathpzc{f}}-{\mathcal{K}^{{\upepsilon}}}\mathcal{R}_{{\upepsilon}}{\mathpzc{x}}).
\label{eq1}
\end{eqn}
To proceed further, since ${\mathcal{K}^{{\upepsilon}}}^{-1}$ is bounded, 
the Lemma \ref{lemma4} can be applied to estimate the first term, hence, mainly the second term in \eqref{eq1} (inside brackets) needs to be estimated. 

Suppose that 
\begin{eqn}
{\mathpzc{k}}={\mathpzc{k}}^{ns}+{\mathpzc{k}}^{sing}
\label{eq11}
\end{eqn}
is the decomposition of ${\mathpzc{k}}$ into non-singular and singular part such that the support of ${\mathpzc{k}}^{sing}$ is $S$ (which can be made as small as desired). Note that $0\in S$ since ${\mathpzc{k}}({\mathit{x}})$ is singular kernel at ${\mathit{x}}=0$. Indeed, separating the non-singular and singular terms, a rearrangement of the second term in bracket in \eqref{eq1} yields
\begin{eqn}
&{\mathcal{R}_{{\upepsilon}}}{\mathpzc{f}}_{{\mathtt{x}}}-({\mathcal{K}^{{\upepsilon}}}\mathcal{R}_{{\upepsilon}}{\mathpzc{x}})_{{\mathtt{x}}}\\
&={\mathpzc{f}}({{\mathtt{x}}}{\upepsilon})-({\mathcal{K}^{{\upepsilon}}}\mathcal{R}_{{\upepsilon}}{\mathpzc{x}})_{{\mathtt{x}}}\\&=(\mathcal{K}{\mathpzc{x}})({{\mathtt{x}}}{\upepsilon})-({\mathcal{K}^{{\upepsilon}}}\mathcal{R}_{{\upepsilon}}{\mathpzc{x}})_{{\mathtt{x}}}=({\mathpzc{k}}\ast{\mathpzc{x}})({{\mathtt{x}}}{\upepsilon})-({\mathtt{k}}^{{\upepsilon}}\ast\mathcal{R}_{{\upepsilon}}{\mathpzc{x}})_{{\mathtt{x}}}\\&=\overset{{\mathit{C}}_{{\mathtt{x}}}{\upepsilon}}{\overbrace{{\upepsilon}\int_{-{\frac{1}{2}}}^{0}{\mathpzc{k}}({\mathtt{x}}{\upepsilon}-t{\upepsilon}){\mathpzc{x}}(t{\upepsilon})dt}}+(({\mathpzc{k}}^{ns}+{\mathpzc{k}}^{sing})\ast{\mathpzc{x}})({{\mathtt{x}}}{\upepsilon})-({\mathtt{k}}^{{\upepsilon}}\ast\mathcal{R}_{{\upepsilon}}{\mathpzc{x}})_{{\mathtt{x}}}\\&={\mathit{C}}_{{\mathtt{x}}}{\upepsilon}+(\mathcal{R}_{{\upepsilon}}({\mathpzc{k}}^{ns}\ast{\mathpzc{x}}))_{{\mathtt{x}}}-({\mathtt{k}}^{{\upepsilon}}\ast\mathcal{R}_{{\upepsilon}}{\mathpzc{x}})_{{\mathtt{x}}}+({\mathpzc{k}}^{sing}\ast{\mathpzc{x}})({{\mathtt{x}}}{\upepsilon})\\&={\mathit{C}}_{{\mathtt{x}}}{\upepsilon}+(\mathcal{R}_{{\upepsilon}}({\mathpzc{k}}^{ns}\ast({\mathpzc{x}}-\mathcal{P}_{{\upepsilon}}\mathcal{R}_{{\upepsilon}}{\mathpzc{x}}+\mathcal{P}_{{\upepsilon}}\mathcal{R}_{{\upepsilon}}{\mathpzc{x}})))_{{\mathtt{x}}}-({\mathtt{k}}^{{\upepsilon}}\ast\mathcal{R}_{{\upepsilon}}{\mathpzc{x}})_{{\mathtt{x}}}\\
&+({\mathpzc{k}}^{sing}\ast{\mathpzc{x}})({{\mathtt{x}}}{\upepsilon})\\&={\mathit{C}}_{{\mathtt{x}}}{\upepsilon}+(\mathcal{R}_{{\upepsilon}}({\mathpzc{k}}^{ns}\ast({\mathpzc{x}}-\mathcal{P}_{{\upepsilon}}\mathcal{R}_{{\upepsilon}}{\mathpzc{x}})))_{{\mathtt{x}}}+(\mathcal{R}_{{\upepsilon}}({\mathpzc{k}}^{ns}\ast\mathcal{P}_{{\upepsilon}}\mathcal{R}_{{\upepsilon}}{\mathpzc{x}}))_{{\mathtt{x}}}-({\mathtt{k}}^{{\upepsilon}}\ast\mathcal{R}_{{\upepsilon}}{\mathpzc{x}})_{{\mathtt{x}}}\\
&+({\mathpzc{k}}^{sing}\ast{\mathpzc{x}})({{\mathtt{x}}}{\upepsilon})\\&=(\mathcal{R}_{{\upepsilon}}\mathscr{I})_{{\mathtt{x}}}+(\mathcal{R}_{{\upepsilon}}({\mathpzc{k}}^{ns}\ast({\mathpzc{x}}-\mathcal{P}_{{\upepsilon}}\mathcal{R}_{{\upepsilon}}{\mathpzc{x}})))_{{\mathtt{x}}}\\&+(\mathcal{R}_{{\upepsilon}}({\mathpzc{k}}^{ns}\ast\mathcal{P}_{{\upepsilon}}\mathcal{R}_{{\upepsilon}}{\mathpzc{x}}))_{{\mathtt{x}}}-({\mathtt{k}}^{{\upepsilon}}\ast\mathcal{R}_{{\upepsilon}}{\mathpzc{x}})_{{\mathtt{x}}}+({\mathpzc{k}}^{sing}\ast{\mathpzc{x}})({{\mathtt{x}}}{\upepsilon}).
\label{eq2t}
\end{eqn}
Equivalently, above can be expressed as
\begin{eqn}
{\mathcal{R}_{{\upepsilon}}}{\mathpzc{f}}-{\mathcal{K}^{{\upepsilon}}}\mathcal{R}_{{\upepsilon}}{\mathpzc{x}}&=\mathcal{R}_{{\upepsilon}}\mathscr{I}+\mathcal{R}_{{\upepsilon}}({\mathpzc{k}}^{ns}\ast({\mathpzc{x}}-\mathcal{P}_{{\upepsilon}}\mathcal{R}_{{\upepsilon}}{\mathpzc{x}}))\\
&+\mathcal{R}_{{\upepsilon}}({\mathpzc{k}}^{ns}\ast\mathcal{P}_{{\upepsilon}}\mathcal{R}_{{\upepsilon}}{\mathpzc{x}})-{\mathtt{k}}^{{\upepsilon}}\ast\mathcal{R}_{{\upepsilon}}{\mathpzc{x}}+\mathcal{R}_{{\upepsilon}}({\mathpzc{k}}^{sing}\ast{\mathpzc{x}}),
\label{eq2}
\end{eqn}
where 
\begin{eqn}
\mathscr{I}({\mathit{x}})={\upepsilon}\int_{-{\frac{1}{2}}}^{0}{\mathpzc{k}}({\mathit{x}}-t{\upepsilon}){\mathpzc{x}}(t{\upepsilon})dt, {\mathit{x}}\in{\mathbb{R}}^-.
\label{eq21}
\end{eqn}
Since $\mathcal{K}$ is a bounded operator on $\mathscr{H}^{\mp{\frac{1}{2}}}({\mathbb{R}}^-)$ as stated in \eqref{cKinv}, ${\mathpzc{k}}^{ns}\ast({\mathpzc{x}}-\mathcal{P}_{{\upepsilon}}\mathcal{R}_{{\upepsilon}}{\mathpzc{x}})$ is bounded by the operator norm of $\mathcal{K}$ and the norm of ${\mathpzc{x}}-\mathcal{P}_{{\upepsilon}}\mathcal{R}_{{\upepsilon}}{\mathpzc{x}}$ in $\mathscr{H}^{\mp{\frac{1}{2}}}({\mathbb{R}}^-)$. The latter goes to zero as ${\upepsilon}\to0$ after the prolongation operator $\mathcal{P}_{{\upepsilon}}$ defined by \eqref{Plen} is suitably chosen such that $\mathcal{P}_{{\upepsilon}}\mathcal{R}_{{\upepsilon}}$ is an approximation of the identity operator $\mathrm{id}$ on $\mathscr{H}^{\mp{\frac{1}{2}}}({\mathbb{R}}^-)$ (see (4.3) stated by \cite{Hackbusch}
). Thus, the remaining `error' estimate in \eqref{eq2} concerns the last three terms in \eqref{eq2} 
which can be rearranged and expressed as 
\begin{eqn}
&(\mathcal{R}_{{\upepsilon}}({\mathpzc{k}}^{ns}\ast\mathcal{P}_{{\upepsilon}}\mathcal{R}_{{\upepsilon}}{\mathpzc{x}}))_{{\mathtt{x}}}-({\mathtt{k}}^{{\upepsilon}}\ast\mathcal{R}_{{\upepsilon}}{\mathpzc{x}})_{{\mathtt{x}}}+({\mathpzc{k}}^{sing}\ast{\mathpzc{x}})({{\mathtt{x}}}{\upepsilon})\\
&=\int_{-\infty}^{-{\frac{1}{2}}{\upepsilon}}{\mathpzc{k}}^{ns}({\mathtt{x}}{\upepsilon}-t)\mathcal{P}_{{\upepsilon}}\mathcal{R}_{{\upepsilon}}{\mathpzc{x}}({t})dt-\sum_{j=-\infty}^{-1}{\mathtt{k}}^{{\upepsilon}}_{{\mathtt{x}}-j}(\mathcal{R}_{{\upepsilon}}{\mathpzc{x}})_{j}+({\mathpzc{k}}^{sing}\ast{\mathpzc{x}})({{\mathtt{x}}}{\upepsilon})\\&=\int_{-\infty}^{-{\frac{1}{2}}{\upepsilon}}{\mathpzc{k}}^{ns}({\mathtt{x}}{\upepsilon}-t)(\mathcal{P}_{{\upepsilon}}\mathcal{R}_{{\upepsilon}}{\mathpzc{x}}({t})-{\mathpzc{x}}(\lfloor{t}/{\upepsilon}\rfloor{\upepsilon})+{\mathpzc{x}}(\lfloor{t}/{\upepsilon}\rfloor{\upepsilon}))dt-\sum_{j=-\infty}^{-1}{\mathtt{k}}^{{\upepsilon}}_{{\mathtt{x}}-j}(\mathcal{R}_{{\upepsilon}}{\mathpzc{x}})_{j}\\
&+({\mathpzc{k}}^{sing}\ast{\mathpzc{x}})({{\mathtt{x}}}{\upepsilon})\\&=\int_{-\infty}^{-{\frac{1}{2}}{\upepsilon}}{\mathpzc{k}}^{ns}({\mathtt{x}}{\upepsilon}-t)(\mathcal{P}_{{\upepsilon}}\mathcal{R}_{{\upepsilon}}{\mathpzc{x}}({t})-{\mathpzc{x}}(\lfloor{t}/{\upepsilon}\rfloor{\upepsilon}))dt\\&+\int_{-\infty}^{{-{\frac{1}{2}}{\upepsilon}}}{\mathpzc{k}}^{ns}({\mathtt{x}}{\upepsilon}-t){\mathpzc{x}}(\lfloor{t}/{\upepsilon}\rfloor{\upepsilon})dt-\sum_{j=-\infty}^{-1}{\mathtt{k}}^{{\upepsilon}}_{{\mathtt{x}}-j}{\mathpzc{x}}({j}{\upepsilon})+({\mathpzc{k}}^{sing}\ast{\mathpzc{x}})({{\mathtt{x}}}{\upepsilon})\\&=\int_{-\infty}^{-{\frac{1}{2}}{\upepsilon}}{\mathpzc{k}}^{ns}({\mathtt{x}}{\upepsilon}-t)(\mathcal{P}_{{\upepsilon}}\mathcal{R}_{{\upepsilon}}{\mathpzc{x}}({t})-{\mathpzc{x}}(\lfloor{t}/{\upepsilon}\rfloor{\upepsilon}))dt\\&+\sum_{j=-\infty}^{-1}\int_{-{\frac{1}{2}}}^{+{\frac{1}{2}}}{\mathpzc{k}}^{ns}({\mathtt{x}}{\upepsilon}-j{\upepsilon}-t{\upepsilon}){\mathpzc{x}}(\lfloor{j{\upepsilon}}/{\upepsilon}\rfloor{\upepsilon})dt-\sum_{j=-\infty}^{-1}{\mathtt{k}}^{{\upepsilon}}_{{\mathtt{x}}-j}{\mathpzc{x}}({j}{\upepsilon})+({\mathpzc{k}}^{sing}\ast{\mathpzc{x}})({{\mathtt{x}}}{\upepsilon})\\&=\int_{-\infty}^{-{\frac{1}{2}}{\upepsilon}}{\mathpzc{k}}^{ns}({\mathtt{x}}{\upepsilon}-t)(\mathcal{P}_{{\upepsilon}}\mathcal{R}_{{\upepsilon}}{\mathpzc{x}}({t})-{\mathpzc{x}}(\lfloor{t}/{\upepsilon}\rfloor{\upepsilon}))dt\\&+\sum_{j=-\infty}^{-1}\int_{-{\frac{1}{2}}}^{+{\frac{1}{2}}}{\mathpzc{k}}^{ns}({\mathtt{x}}{\upepsilon}-j{\upepsilon}-t{\upepsilon}){\mathpzc{x}}(j{\upepsilon})dt-\sum_{j=-\infty}^{-1}{\mathtt{k}}^{{\upepsilon}}_{{\mathtt{x}}-j}{\mathpzc{x}}({j}{\upepsilon})+({\mathpzc{k}}^{sing}\ast{\mathpzc{x}})({{\mathtt{x}}}{\upepsilon})\\&=\int_{-\infty}^{-{\frac{1}{2}}{\upepsilon}}{\mathpzc{k}}^{ns}({\mathtt{x}}{\upepsilon}-t)(\mathcal{P}_{{\upepsilon}}\mathcal{R}_{{\upepsilon}}{\mathpzc{x}}({t})-{\mathpzc{x}}(\lfloor{t}/{\upepsilon}\rfloor{\upepsilon}))dt\\&+\sum_{j=-\infty}^{-1}(\overset{\overline{{\mathtt{k}}}_{{\mathtt{x}}-j}^{{ns}:{\upepsilon}}}{\overbrace{{\upepsilon}\int_{-{\frac{1}{2}}}^{+{\frac{1}{2}}}{\mathpzc{k}}^{ns}({\mathtt{x}}{\upepsilon}-j{\upepsilon}-t{\upepsilon})dt}}){\mathpzc{x}}(j{\upepsilon})-\sum_{j=-\infty}^{-1}{\mathtt{k}}^{{\upepsilon}}_{{\mathtt{x}}-j}{\mathpzc{x}}({j}{\upepsilon})+({\mathpzc{k}}^{sing}\ast{\mathpzc{x}})({{\mathtt{x}}}{\upepsilon})\\
&=\bigg(\sum_{j=-\infty}^{-1}({\upepsilon}\int_{-{\frac{1}{2}}}^{+{\frac{1}{2}}}{\mathpzc{k}}^{ns}({\mathtt{x}}{\upepsilon}-j{\upepsilon}-t{\upepsilon})(\mathcal{P}_{{\upepsilon}}\mathcal{R}_{{\upepsilon}}{\mathpzc{x}}({t}{\upepsilon}))dt-\overline{{\mathtt{k}}}_{{\mathtt{x}}-j}^{{ns}:{\upepsilon}}{\mathpzc{x}}({j}{\upepsilon}))\bigg)\\
&+\sum_{j=-\infty}^{-1}(\overline{{\mathtt{k}}}_{{\mathtt{x}}-j}^{{ns}:{\upepsilon}}-\widetilde{{\mathtt{k}}}^{{\upepsilon}}_{{\mathtt{x}}-j}){\mathpzc{x}}({j}{\upepsilon})+\bigg(-\sum_{j\in S^{{\upepsilon}}}{{\mathtt{k}}}^{{\upepsilon}}_{{\mathtt{x}}-j}{\mathpzc{x}}({j}{\upepsilon})+({\mathpzc{k}}^{sing}\ast{\mathpzc{x}})({{\mathtt{x}}}{\upepsilon})\bigg),
\label{eq3}
\end{eqn}
where 
\begin{eqn}
\overline{{\mathtt{k}}}_{{\mathtt{x}}-j}^{{ns}:{\upepsilon}}={\upepsilon}\int_{-{\frac{1}{2}}}^{+{\frac{1}{2}}}{\mathpzc{k}}^{ns}({\mathtt{x}}{\upepsilon}-j{\upepsilon}-t{\upepsilon})dt,
\label{eq31}
\end{eqn}
and $\widetilde{{\mathtt{k}}}^{{\upepsilon}}$ is defined such that $\widetilde{{\mathtt{k}}}^{{\upepsilon}}_{{\mathtt{x}}-j}={\mathtt{k}}^{{\upepsilon}}_{{\mathtt{x}}-j}$ outside the set $S^{{\upepsilon}}=S\cap({\upepsilon}{\mathbb{Z}})$ and equal to $0$ inside it. 
The first term in \eqref{eq3} can be controlled by ${\upepsilon}$ due to the presence of $\mathcal{P}_{{\upepsilon}}\mathcal{R}_{{\upepsilon}}$ as an approximation of the identity operator. For the second term in \eqref{eq3}, it is clear that there is a standard estimate (Riemann sum) for $\overline{{\mathtt{k}}}_{{\mathtt{x}}-j}^{{ns}:{\upepsilon}}-\widetilde{{\mathtt{k}}}^{{\upepsilon}}_{{\mathtt{x}}-j}$ since the kernel ${\mathpzc{k}}^{ns}(t)$ is non-singular. For $j\ne{\mathtt{x}}$, there is also an estimate for the singular part ${\mathpzc{k}}^{sing}(t)$ at $t\ne0$ while at $t=0$, ${\mathpzc{k}}^{sing}(t)$ is logarithmic for (D) and hypersingular for (N) implying that more care is required for its estimate. 

Outside the finite set $S^{{\upepsilon}}$, using the far field expansion of Hankel function as well as the square Lattice Green's function, it can be easily shown that \begin{eqn}
{\mathtt{k}}^{{\upepsilon}}_{{\mathtt{x}}}-{\mathpzc{k}}({\mathtt{x}}{\upepsilon}) {\upepsilon}={\upepsilon} O(\frac{e^{-{\upkappa}_2 |{{\mathtt{x}}}|}}{|{\mathtt{x}}{\upkappa}|})={\upepsilon} O(\frac{e^{-{\mathit{k}}_2 |{{\mathit{x}}}|}}{|{\mathit{x}}{\mathit{k}}|}), 
\label{}
\end{eqn}
so that by an application of mean value theorem to \eqref{eq31}, the second term in \eqref{eq3} can be controlled by ${\upepsilon}$. 
Finally, the third, and the last term, that needs to be estimated in \eqref{eq3} is the singular part $-\sum_{j\in S^{{\upepsilon}}}{{\mathtt{k}}}^{{\upepsilon}}_{{\mathtt{x}}-j}{\mathpzc{x}}({j}{\upepsilon})+({\mathpzc{k}}^{sing}\ast{\mathpzc{x}})({{\mathtt{x}}}{\upepsilon})$. For the Dirichlet boundary condition, the kernel is logarithmic and therefore, using the definition of $\overline{{\mathtt{k}}}_{0}^{{sing}:{\upepsilon}}$ analogous to \eqref{eq31}, 
\begin{eqn}
|\overline{{\mathtt{k}}}_{0}^{{sing}:{\upepsilon}}|=|{\upepsilon}\int_{-{\frac{1}{2}}}^{+{\frac{1}{2}}}{\mathpzc{k}}(-t{\upepsilon})dt|<{\mathit{C}}{\upepsilon}\log{\upepsilon},
\label{}
\end{eqn}
which yields $\norm{\sum_{j\in S^{{\upepsilon}}}(\overline{{\mathtt{k}}}_{{\mathtt{x}}-j}^{{sing}:{\upepsilon}}-{{\mathtt{k}}}_{{\mathtt{x}}-j}){\mathpzc{x}}({j}{\upepsilon})}_{\ell_2}=O(1)$. For the Neumann boundary condition, the kernel is hypersingular  and, therefore, a coarse estimate is 
\begin{eqn}
\overline{{\mathtt{k}}}_{0}^{{sing}:{\upepsilon}}={\upepsilon}\int_{-{\frac{1}{2}}}^{+{\frac{1}{2}}}{\mathpzc{k}}(-t{\upepsilon})dt=-\frac{2}{\pi} \frac{1}{{\upepsilon}},
\label{}
\end{eqn}
which gives $\norm{\sum_{j\in S^{{\upepsilon}}}(\overline{{\mathtt{k}}}_{{\mathtt{x}}-j}^{{sing}:{\upepsilon}}-{{\mathtt{k}}}_{{\mathtt{x}}-j}){\mathpzc{x}}({j}{\upepsilon})}_{\ell_2}=O({\upepsilon}^{-1})$ and, undoubtedly, it is unacceptable. But a refinement is possible. 

For the Neumann boundary condition, let $\beta\in(0, 1)$. Then the last term in bracket in \eqref{eq3} can be simplified with a rearrangement, using a definition of $\overline{{\mathtt{k}}}_{0}^{{sing}:{\upepsilon}}$ analogous to \eqref{eq31},
\begin{eqn}
&-\sum_{j\in S^{{\upepsilon}}}{{\mathtt{k}}}^{{\upepsilon}}_{{\mathtt{x}}-j}{\mathpzc{x}}({j}{\upepsilon})+({\mathpzc{k}}^{sing}\ast{\mathpzc{x}})({{\mathtt{x}}}{\upepsilon})\\
&=\sum_{j\in S^{{\upepsilon}}}(\beta
\overline{{\mathtt{k}}}_{{\mathtt{x}}-j}^{{sing}:{\upepsilon}}
-{{\mathtt{k}}}^{{\upepsilon}}_{{\mathtt{x}}-j}){\mathpzc{x}}({j}{\upepsilon})\\
&+\beta\bigg(\sum_{j=-\infty}^{-1}({\upepsilon}\int_{-{\frac{1}{2}}}^{+{\frac{1}{2}}}{\mathpzc{k}}^{sing}({\mathtt{x}}{\upepsilon}-j{\upepsilon}-t{\upepsilon})(\mathcal{P}_{{\upepsilon}}\mathcal{R}_{{\upepsilon}}{\mathpzc{x}}({t}{\upepsilon}))dt-\overline{{\mathtt{k}}}_{{\mathtt{x}}-j}^{{sing}:{\upepsilon}}{\mathpzc{x}}({j}{\upepsilon}))\bigg)\\
&+\beta\bigg({\mathpzc{k}}^{sing}\ast({\mathpzc{x}}-\mathcal{P}_{{\upepsilon}}\mathcal{R}_{{\upepsilon}}{\mathpzc{x}})({{\mathtt{x}}}{\upepsilon})\bigg)+(1-\beta)({\mathpzc{k}}^{sing}\ast{\mathpzc{x}})({{\mathtt{x}}}{\upepsilon}).
\label{eq5}
\end{eqn}
Since ${{\mathtt{k}}}^{{\upepsilon}}_{0}=-{\upepsilon}^{-1}(1-2({{\mathcal{G}}}^{{\upepsilon}}_{0, 1}-{{\mathcal{G}}}^{{\upepsilon}}_{0, 0}))=-{\upepsilon}^{-1}{\frac{1}{2}}(1+{\upepsilon}^2{\omega}^2{{\mathcal{G}}}^{{\upepsilon}}_{0, 0})$ as ${\upepsilon}\to0$, $\beta$ is chosen to be $\frac{\pi}{4}$ and the first term in \eqref{eq5} is controlled. By the same reason as for ${\mathpzc{k}}^{ns}$ provided earlier, it follows that the contribution of second and third terms (in brackets)  vanishes as ${\upepsilon}\to0$. 
Since ${\mathpzc{k}}^{sing}$ is only a part of a kernel of bounded operator, the norm of $({\mathpzc{k}}^{sing}\ast{\mathpzc{x}})$ is dominated by the measure of $S$ which can be chosen to diminish as ${\upepsilon}\to0.$ 

Collecting all terms together with re-arrangements described above, equation \eqref{eq1} can be written as
\begin{eqn}
{\mathpzc{x}}^{{\upepsilon}}_{{\mathtt{x}}}-{\mathpzc{x}}({\mathtt{x}}{\upepsilon})&={\mathcal{K}^{{\upepsilon}}}^{-1}\bigg(({\mathpzc{f}}^{{\upepsilon}}_{{\mathtt{x}}}-{\mathcal{R}_{{\upepsilon}}}{\mathpzc{f}}_{{\mathtt{x}}})+(\mathcal{R}_{{\upepsilon}}({\mathpzc{k}}^{ns}\ast({\mathpzc{x}}-\mathcal{P}_{{\upepsilon}}\mathcal{R}_{{\upepsilon}}{\mathpzc{x}})))_{{\mathtt{x}}}\\
&+(\mathcal{R}_{{\upepsilon}}\mathscr{I})_{{\mathtt{x}}}+\sum_{j=-\infty}^{-1}({\upepsilon}\int_{-{\frac{1}{2}}}^{+{\frac{1}{2}}}{\mathpzc{k}}^{ns}({\mathtt{x}}{\upepsilon}-j{\upepsilon}-t{\upepsilon})(\mathcal{P}_{{\upepsilon}}\mathcal{R}_{{\upepsilon}}{\mathpzc{x}}({t}{\upepsilon}))dt-\overline{{\mathtt{k}}}_{{\mathtt{x}}-j}^{{ns}:{\upepsilon}}{\mathpzc{x}}({j}{\upepsilon}))\\
&+\sum_{j=-\infty}^{-1}(\overline{{\mathtt{k}}}_{{\mathtt{x}}-j}^{{ns}:{\upepsilon}}-\widetilde{{\mathtt{k}}}^{{\upepsilon}}_{{\mathtt{x}}-j}){\mathpzc{x}}({j}{\upepsilon})+\sum_{j\in S^{{\upepsilon}}}(\beta\overline{{\mathtt{k}}}_{{\mathtt{x}}-j}^{{sing}:{\upepsilon}}-{{\mathtt{k}}}^{{\upepsilon}}_{{\mathtt{x}}-j}){\mathpzc{x}}({j}{\upepsilon})\\
&+\beta{\mathpzc{k}}^{sing}\ast({\mathpzc{x}}-\mathcal{R}_{{\upepsilon}}{\mathpzc{x}})({{\mathtt{x}}}{\upepsilon})+(1-\beta)({\mathpzc{k}}^{sing}\ast{\mathpzc{x}})({{\mathtt{x}}}{\upepsilon})\bigg).
\label{eq6}
\end{eqn}
Using above expression
\begin{eqn}\norm{{\mathpzc{x}}^{{\upepsilon}}-\mathcal{R}_{{\upepsilon}}{\mathpzc{x}}}_{\mp{\frac{1}{2}}}&\le\norm{{\mathcal{K}^{{\upepsilon}}}^{-1}}\norm{{\mathpzc{f}}^{{\upepsilon}}_{{\mathtt{x}}}-{\mathcal{R}_{{\upepsilon}}}{\mathpzc{f}}_{{\mathtt{x}}}}_{\pm{\frac{1}{2}}}+\norm{{\mathcal{K}^{{\upepsilon}}}^{-1}}\bigg({\mathit{C}}_1\norm{\mathcal{K}^{ns}}\norm{{\mathpzc{x}}-\mathcal{R}_{{\upepsilon}}{\mathpzc{x}}}_{\mp{\frac{1}{2}}}\\&+\norm{\mathcal{K}}\norm{{\mathpzc{x}}}_{\mp{\frac{1}{2}}}{\upepsilon}+\norm{\overline{{\mathtt{k}}}^{{ns}:{\upepsilon}}-\widetilde{{\mathtt{k}}}^{{\upepsilon}}}\norm{{\mathpzc{x}}}_{\mp{\frac{1}{2}}}+\norm{\sum_{j\in S^{{\upepsilon}}}O(1){\mathpzc{x}}({j}{\upepsilon})}_{\mp{\frac{1}{2}}}\\&+\beta\norm{\mathcal{K}^{sing}}\norm{{\mathpzc{x}}-\mathcal{R}_{{\upepsilon}}{\mathpzc{x}}}_{\mp{\frac{1}{2}}}+(1-\beta)\norm{\mathcal{K}^{sing}}\norm{{\mathpzc{x}}}_{\mp{\frac{1}{2}}}\bigg)\\&\le{\mathit{C}}\bigg({\upepsilon}^{{\frac{1}{2}}}+\norm{\id{}-\mathcal{P}_{{\upepsilon}}\mathcal{R}_{{\upepsilon}}}+{\upepsilon}+\norm{\overline{{\mathtt{k}}}^{{ns}:{\upepsilon}}-\widetilde{{\mathtt{k}}}^{{\upepsilon}}}+\text{meas}(S^{{\upepsilon}})+\norm{\id{}-\mathcal{P}_{{\upepsilon}}\mathcal{R}_{{\upepsilon}}}\\&+\norm{\mathcal{K}^{sing}}\text{meas}(S^{{\upepsilon}})\bigg)\to0\text{ as }{\upepsilon}\to0,\label{eq7}\end{eqn}
but
\begin{eqn}
&\norm{{\mathpzc{f}}^{{\upepsilon}}-{\mathcal{R}_{{\upepsilon}}}{\mathpzc{f}}}_{\pm{\frac{1}{2}}}+\bigg({\mathit{C}}_1\norm{\mathcal{K}^{ns}}\norm{{\mathpzc{x}}-\mathcal{R}_{{\upepsilon}}{\mathpzc{x}}}_{\mp{\frac{1}{2}}}+\norm{\mathcal{K}}\norm{{\mathpzc{x}}}_{\mp{\frac{1}{2}}}{\upepsilon}+\norm{\overline{{\mathtt{k}}}^{{ns}:{\upepsilon}}-\widetilde{{\mathtt{k}}}^{{\upepsilon}}}\norm{{\mathpzc{x}}}_{\mp{\frac{1}{2}}}\\
&+\norm{\sum_{j\in S^{{\upepsilon}}}O(1){\mathpzc{x}}({j}{\upepsilon})}_{\mp{\frac{1}{2}}}+\beta\norm{\mathcal{K}^{sing}}\norm{{\mathpzc{x}}-\mathcal{R}_{{\upepsilon}}{\mathpzc{x}}}_{\mp{\frac{1}{2}}}+(1-\beta)\norm{\mathcal{K}^{sing}}\norm{{\mathpzc{x}}}_{\mp{\frac{1}{2}}}\bigg)\\
&\le\norm{{\mathpzc{f}}^{{\upepsilon}}_{{\mathtt{x}}}-{\mathcal{R}_{{\upepsilon}}}{\mathpzc{f}}_{{\mathtt{x}}}}_{\pm{\frac{1}{2}}}
+{\mathit{C}}\bigg(\norm{\id{}-\mathcal{P}_{{\upepsilon}}\mathcal{R}_{{\upepsilon}}}+{\upepsilon}+\norm{\overline{{\mathtt{k}}}^{{ns}:{\upepsilon}}-\widetilde{{\mathtt{k}}}^{{\upepsilon}}}+\text{meas}(S)+\norm{\id{}-\mathcal{P}_{{\upepsilon}}\mathcal{R}_{{\upepsilon}}}\\
&+\norm{\mathcal{K}^{sing}}\text{meas}(S)\bigg)\to0\text{ as }{\upepsilon}\to0.
\label{eq72}
\end{eqn}
Note that $\norm{\cdot}$ refers to the corresponding operator norm. The projection-restriction operators are chosen (with $\mathrm{id}$ as an identity operator) such that $\norm{\mathrm{id}-\mathcal{P}_{{\upepsilon}}\mathcal{R}_{{\upepsilon}}}$ can be made as small as desired \citep{Hackbusch}, as well as the Lebesgue measure of singular set $S$. Using Lemma \ref{lemma4} and arguments presented above, from \eqref{eq7} it can be concluded that ${\mathcal{K}^{{\upepsilon}}}({\mathpzc{x}}^{{\upepsilon}}_{{\mathtt{x}}}-{\mathpzc{x}}({\mathtt{x}}{\upepsilon}))\to0$ as ${\upepsilon}\to0$. 
By the coercivity condition \eqref{CoercK}, the convergence in norm ${\mathpzc{x}}^{{\upepsilon}}\to{\mathpzc{x}}$, hence, follows as claimed. 
\qed
\end{proof}

\begin{figure}[htb!]\centering
\includegraphics[width=\textwidth]{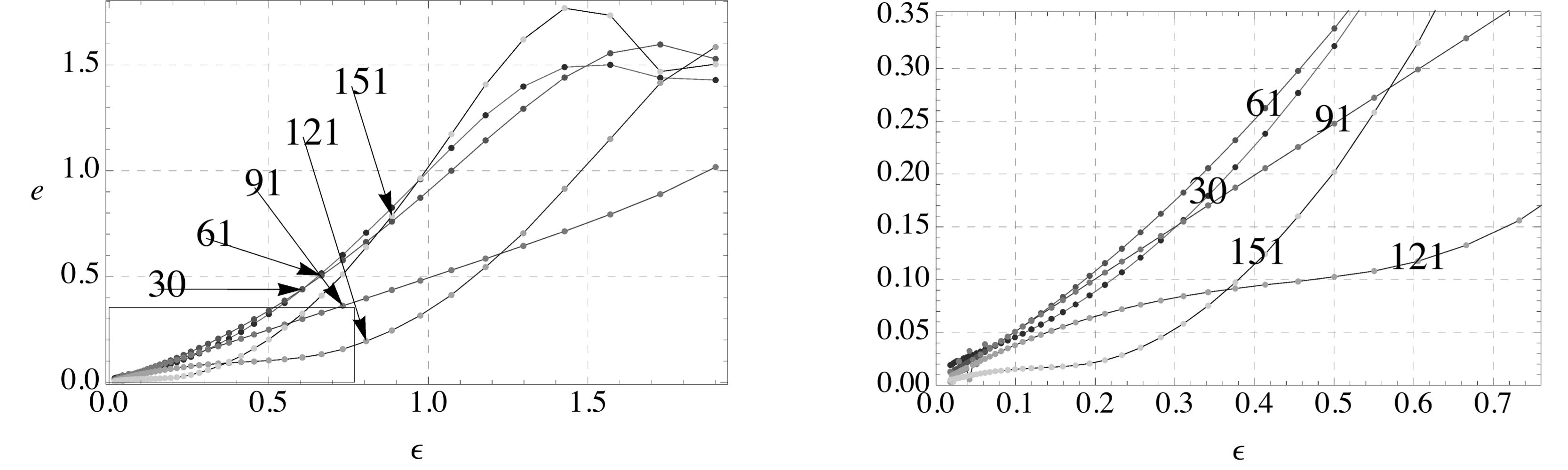}
\caption{\footnotesize $\mathscr{H}^{-{\frac{1}{2}}}$ error vs ${\upepsilon}$ (see \eqref{error}, \eqref{cwx}, \eqref{dWHw}, and \eqref{dSSnorm}) for semi-infinite rigid constraint (`discrete' Dirichlet) with different shades of gray depending on ${{\mathbb{T}}heta}$ as indicated. Right plot is a zoomed portion corresponding to a rectangle shown in the left plot. }\label{HerrorC}\end{figure}

\begin{remark}
For a graphical illustration of the continuum limit for the discrete Dirichlet problem, a numerical evaluation of the relative `error' norm
\begin{eqn}
e({\upepsilon})=\norm{{\mathpzc{x}}^{{\upepsilon}}-\mathcal{R}_{{\upepsilon}}{\mathpzc{x}}}_{-{\frac{1}{2}}}/\norm{\mathcal{R}_{{\upepsilon}}{\mathpzc{x}}}_{\mp{\frac{1}{2}}},
\label{error}
\end{eqn}
in the discrete Sobolev space indicated by the subscript, with respect to ${\upepsilon}$, is considered after a truncation of the infinite sum $\sum_{j\in{\mathbb{Z}}^-}$ by $\sum_{j=-\lfloor X/{\upepsilon}\rfloor}^{-1},$ with $X$ sufficiently large. This is shown in Fig. \ref{HerrorC} 
with a choice $X=76, {\omega}=1$, and using the exact solutions of the discrete Sommerfeld problem and the continuous Sommerfeld problem following the expressions reported by \cite{Bls0} and \cite{Bls2} and \cite{Sommerfeld1, Sommerfeld2, 
ThauPao}, respectively. The dots in Fig. \ref{HerrorC} 
correspond to numerical values for certain choices of ${\upepsilon}$ and are connected by line segments for visual convenience. 
The numerical result provides a graphical illustration of 
Theorem \ref{maineqn}, as 
the `error' $e({\upepsilon})$ goes to zero as ${\upepsilon}\to0$. Analogous illustration of the continuum limit for the discrete Neumann problem has been previously announced and presented by \cite{Bls2} (as Fig. 5). 
\end{remark}
\begin{remark}
For non-zero ${\upepsilon}$, since the discrete Sobolev norms are equivalent to $\ell_2$, graphical results for some large values of ${\upepsilon}$ can be also considered. This topic is discussed by \cite{Bls2} and \cite{Bls3}.
\end{remark}

\section{Conclusion}
In this paper, the continuum limit of discrete Sommerfeld problems, involving a two-dimensional problem of diffraction of a time harmonic lattice wave by the tip of a semi-infinite crack or rigid constraint, on square lattice has been established, 
while utilizing the benefit of hindsight, i.e. the asymptotic results on low frequency approximation stated in recent papers of the author. 
The work of \cite{Hackbusch} provided the required connection with continuous Sommerfeld problems. The results can also be interpreted in terms of error analysis for a numeral method to solve diffraction problems. 
It appears that an alternative proof of the main result is also possible using the results of \cite{EggLubich} and some recent progress in convolution error estimates for non-sectorial kernel \citep{Banjai}, however, the application requires certain estimates of the discretized symbol which are not available for discretization based on the Runge--Kutta methods.
Based on the availability of an extremely large number of rigorous results in numerical analysis of finite difference methods, after presenting the analysis in this paper, a natural question arises: does there exist a shorter proof of the continuum limit? 
However, in the presence of a limited survey, the author of this paper has not come across any such proof.
{Last but not the least, there is also an open problem associated with the limit $\omega_2 \to0+$, as the presented argument breaks down for $\omega_2=0$. The issue involved concerns the analysis of the discrete problem first which is, as yet, an open problem in the discrete Sommerfeld problems as introduced by the author (see \cite{Bls2} just before \S4, and \cite{Bls3} just before \S7.1, for comments concerning the open problem of existence for the limit $\omega_2 \to0+$).}

\section*{Acknowledgements}
This work was partially supported by the grant IITK/ME/20080334 and project 
IITK/ME/20090027. The author thanks A. Anand and P. Munshi for 
suggestions. 
The author thanks an anonymous reviewer of \citep{Bls3} for a comment that prompted the author to prepare a rigorous proof of continuum limit that had been deferred during the preparation of the papers \citep{Bls0}--\citep{Bls3}. 
{The author thanks the anonymous reviewers for several constructive comments and suggestions.}

\appendix
\section{Auxiliary details} 
\label{appL}
The zeros of ${{\mathpzc{H}}}$ are ${{z}}_{{\mathpzc{h}}}$ and $1/{{z}}_{{\mathpzc{h}}}$ while the zeros of ${{\mathpzc{R}}}$ are ${{z}}_{{\mathpzc{r}}}$ and $1/{{z}}_{{\mathpzc{r}}}$, where the complex numbers ${{z}}_{{\mathpzc{h}}}$ and ${{z}}_{{\mathpzc{r}}}$ are given by 
$$
{{z}}_{{\mathpzc{h}}}{:=}{\frac{1}{2}} (2-{\upomega}^2\pm{\upomega}\sqrt{{\upomega}^2-4} ), \text{\rm and }{{z}}_{{\mathpzc{r}}}{:=}{\frac{1}{2}} (6-{\upomega}^2\pm\sqrt{{\upomega}^4-12 {\upomega}^2+32}).
$$
The $\pm$ sign, in front of the square root, is chosen such that ${{z}}_{{\mathpzc{h}}}, {{z}}_{{\mathpzc{r}}}$ lie inside the unit circle ${{\mathbb{T}}}$ (note that ${\upomega}_2>0$). 
The zeros of ${{\mathpzc{Q}}}$ are ${{z}}_{\sq}$ and $1/{{z}}_{\sq}$ with 
$$
{{z}}_{\sq}={\frac{1}{2}} (4-{\upomega}^2-\sqrt{12-8{\upomega}^2+{\upomega}^4})
$$
such that $|{{z}}_{\sq}|<1$. Note that the {\em Wiener--Hopf kernel} ${{\mathpzc{L}}_{{c}}}$ is analytic in ${{\mathscr{A}}}_{{\mathpzc{L}}_{{c}}}$ while ${{\mathpzc{L}}_{{k}}}$ is analytic in ${{\mathscr{A}}}_{{\mathpzc{L}}_{{k}}}$, where
\beqans
{{\mathscr{A}}}_{{\mathpzc{L}}_{{k}}}&{:=}&\{{{z}}\in{\mathbb{C}} : {{\mathit{R}}}_{{\mathpzc{L}}_{{k}}}< |{{z}}|< {{\mathit{R}}}_{{\mathpzc{L}}_{{k}}}^{-1}\}, {{\mathit{R}}}_{{\mathpzc{L}}_{{k}}}{:=}\max\{|{{z}}_{{\mathpzc{h}}}|, |{{z}}_{{\mathpzc{r}}}|\},\label{annALk}\\
{{\mathscr{A}}}_{{\mathpzc{L}}_{{c}}}&{:=}&{{\mathscr{A}}}_{{{\mathpzc{L}}_{{k}}}}\cap\{{{z}}\in{\mathbb{C}} : |{{z}}_{\sq}|< |{{z}}|<|{{z}}_{\sq}|^{-1}\},\label{annALc}
\eeqans{annAc}

\section{Discrete Sobolev spaces}
\label{appS}
Let $\mathscr{H}^s$ ($s\in{\mathbb{R}}$) denote the collection of $u_{{\upepsilon}}: {\upepsilon}{\mathbb{Z}}\to{\mathbb{C}}$ with $\norm{u_{{\upepsilon}}}_s< \infty$, where
\begin{eqn}
\norm{u_{{\upepsilon}}}_s=(\frac{{\upepsilon}}{2\pi})^{{\frac{1}{2}}}\norm{\varpi_{{\upepsilon}}^{s}u_{{\upepsilon}}^F}_{{\mathscr{L}}_2(I)}, I=[-\pi, \pi]\subset {\mathbb{R}},
\label{dSSnorm}
\end{eqn}
and the weight $\varpi_{{\upepsilon}}$ is defined by
\begin{eqn}
\varpi_{{\upepsilon}}({\upxi})=(1+4{\upepsilon}^{-2}\sin^2{\frac{1}{2}}{\upxi})^{{\frac{1}{2}}}, {\upxi}\in I.
\label{dSSweight}
\end{eqn}
In \eqref{dSSnorm}, $u^F_{{\upepsilon}}$ is the discrete Fourier transform \citep{Krein, Silbermann, Slepyanbook} of $u_{{\upepsilon}}$ defined by 
\begin{eqn}
u^F_{{\upepsilon}}({\upxi}) = \sum_{j\in{\mathbb{Z}}}u_{{\upepsilon}}(j{\upepsilon}) e^{-ij{\upxi}}~({\upxi}\in I),
\label{dFT}
\end{eqn}
with inverse transformation given by
\begin{eqn}
u_{{\upepsilon}}(j{\upepsilon}) = \frac{1}{{2\pi}}\int_{I}u^F_{{\upepsilon}}({\upxi}) e^{ij{\upxi}}d{\upxi}.
\label{invdFT}
\end{eqn}
$\mathscr{H}^0$ is the discrete analogue of ${\mathscr{L}}_2({\mathbb{R}})$ and is a Hilbert space with the standard scalar product 
\begin{eqn}
\langle u_{{\upepsilon}}, v_{{\upepsilon}}\rangle = {\upepsilon}\sum_{j\in{\mathbb{Z}}}u_{{\upepsilon}}(j{\upepsilon}) \overline{v_{{\upepsilon}}(j{\upepsilon})},
\end{eqn}
which defines the norm $\norm{u_{{\upepsilon}}}_0=\sqrt{\langle u_{{\upepsilon}}, u_{{\upepsilon}}\rangle}.$ Usually, this Hilbert space is denoted by $\ell_2.$ 
Note that by Plancherel Theorem $\langle u_{{\upepsilon}}, v_{{\upepsilon}}\rangle ={\upepsilon}\langle u_{{\upepsilon}}, v_{{\upepsilon}}\rangle_{{\mathscr{L}}_2(I)}$ and, in particular, 
$\norm{u_{{\upepsilon}}}_0= (\frac{{\upepsilon}}{2\pi})^{{\frac{1}{2}}}\norm{u_{{\upepsilon}}^F}_{{\mathscr{L}}_2(I)}.$
So $\mathscr{H}^0$ consists of all complex-valued functions $u_{{\upepsilon}}$ with finite $\norm{\cdot}_0$ norm defined by
$\norm{u_{{\upepsilon}}}_0= {\upepsilon}^{{\frac{1}{2}}}(\sum_{j\in{\mathbb{Z}}}|u_{{\upepsilon}}(j{\upepsilon})|^2)^{{\frac{1}{2}}}.$

Let 
$$
\langle u_{{\upepsilon}}, v_{{\upepsilon}}\rangle_s={\upepsilon}\int_I \varpi_{{\upepsilon}}^{2s}({\upxi})u_{{\upepsilon}}^F({\upxi})\overline{v_{{\upepsilon}}^F({\upxi})}d{\upxi}, u_{{\upepsilon}}, v_{{\upepsilon}}\in\mathscr{H}^s,
$$
so that $\norm{u_{{\upepsilon}}}_s=\sqrt{\langle u_{{\upepsilon}}, u_{{\upepsilon}}\rangle_s}.$ $\norm{u_{{\upepsilon}}}_s$ is a natural definition since for $s = n= 1$ it coincides with the usual definition
\begin{eqn}
\norm{u_{{\upepsilon}}}^\ast_n =(\sum_{|\alpha|\le n, \alpha\in{\mathbb{Z}}_0^+}|\partial^\alpha_{{\upepsilon}}u_{{\upepsilon}}|_0^2)^{{\frac{1}{2}}}~(n\in{\mathbb{Z}}^+_0),
\label{}
\end{eqn}
where $\partial^\alpha_{{\upepsilon}}={\upepsilon}^{-1}(\id{}_{{\upepsilon}}-T^{-1}_{{\upepsilon}}), T_{{\upepsilon}}u_{{\upepsilon}}(x)=u_{{\upepsilon}}(x+{\upepsilon}).$ For general $n=s\in{\mathbb{Z}}_0^+,$ there exist ${\mathit{C}}_1, {\mathit{C}}_2>0$ such that ${\mathit{C}}_1\norm{\cdot}_n^\ast\le\norm{\cdot}_n\le {\mathit{C}}_2\norm{\cdot}_n^\ast.$ It is easily verified that 
$\norm{u_{{\upepsilon}}}_n$ is equivalent to $\norm{u_{{\upepsilon}}}^\ast_n$ for $n\in{\mathbb{Z}}^+.$ 
For each ${\upepsilon}$, all $s$-norms are equivalent, but, indeed, the equivalence does not hold uniformly in ${\upepsilon}$. Also for $s<t$,
$\norm{\cdot}_t\le c(t-s){\upepsilon}^{s-t}\norm{\cdot}_s.$

Note that for $s\ge 0$, 
$$
\varpi_{{\upepsilon}}^{s}({\upxi})<(1+{\upepsilon}^{-2}{\upxi}^2)^s, {\upxi}\in(-\pi, \pi),
$$ 
while for $s<0$,
$$
\varpi_{{\upepsilon}}^{s}({\upxi})>(\frac{2}{\pi})^s(1+{\upepsilon}^{-2}{\upxi}^2)^s, {\upxi}\in(-\pi, \pi).
$$
Let $u_{{\upepsilon}}=\mathcal{R}_{{\upepsilon}}u, u\in {\mathscr{H}}^s({\mathbb{R}})$. Then, for $s\ge 0$,
\begin{eqn}
\norm{u_{{\upepsilon}}}_s^2&=(\frac{{\upepsilon}}{2\pi})\norm{\varpi_{{\upepsilon}}^{s}u_{{\upepsilon}}^F}^2_{{\mathscr{L}}_2(I)}=(\frac{{\upepsilon}}{2\pi})\int_{-\pi}^{\pi}\varpi_{{\upepsilon}}^{2s}({\upxi}){|u^F_{{\upepsilon}}|}^2({\upxi})d{\upxi}\\&=\frac{1}{2\pi}\int_{-{\upepsilon}^{-1}\pi}^{{\upepsilon}^{-1}\pi}\varpi_{{\upepsilon}}^{2s}({\upepsilon}{\xi}){\upepsilon}^2{|u^F_{{\upepsilon}}|}^2({\upepsilon}{\xi})d{\xi}\\
&\le\frac{1}{2\pi}\int_{-{\upepsilon}^{-1}\pi}^{{\upepsilon}^{-1}\pi}(1+{\xi}^2)^s{\upepsilon}^2{|u^F_{{\upepsilon}}|}^2({\upepsilon}{\xi})d{\xi}\le{\mathit{C}}\norm{u}_{\mathscr{H}^s({\mathbb{R}})}^2,
\label{DCSob1}
\end{eqn}
and, by a similar argument, $\norm{u}_{\mathscr{H}^s({\mathbb{R}})}\le{\mathit{C}}\norm{u_{{\upepsilon}}}_s$. For $s<0$ also, it can be shown that $\norm{u_{{\upepsilon}}}_s^2\le{\mathit{C}}\norm{u}_{\mathscr{H}^s({\mathbb{R}})}^2$ and $\norm{u}_{\mathscr{H}^s({\mathbb{R}})}\le{\mathit{C}}\norm{u_{{\upepsilon}}}_s$.

\end{document}